\documentclass{amsart}
\usepackage{tikz}
\usetikzlibrary{shapes.geometric}
\usepackage{amsfonts, amsmath, amssymb, amsthm}
\newtheorem{theorem}{Theorem}[section]
\newtheorem{remark}[theorem]{ Remark}

\newtheorem{example}[theorem]{ Example}

\newtheorem{proposition}[theorem]{Proposition}

\newtheorem{lemma}[theorem]{Lemma}

\newtheorem{definition}[theorem]{Definition}

\newcommand{\beq} {\begin{equation}}
\newcommand{\eeq} {\end{equation}}

\begin{document}

\title[Zero-Set Intersection Graph on $C_{+}(X)$]{ Zero-Set Intersection Graph on $C_{+}(X)$}
\author{Soumi Basu, Bedanta Bose}

\address{Jadavpur University, Department of Mathematics, Kolkata, West Bengal, India}
\email{basu.soumi2018@gmail.com}

\address{Swami Niswambalananda Girls' College\\ 115, BPMB Sarani\\ Bhadrakali, Hooghly - 712232\\ India}
\email{anabedanta@gmail.com}

\subjclass[2010]{54D35, 30H50}
\begin{abstract}
	For any Tychonoff space $X$ we have introduced the zero-set intersection graph on $\Gamma(C_{+}(X))$ and studied the graph properties in connection with the algebraic properties of the semiring $C_{+}(X)$. We have shown that for any two realcompact spaces $X$ and $Y$ the graph isomorphism between $\Gamma(C_{+}(X))$ and $\Gamma(C_{+}(Y))$, the semiring isomorphism between $C_{+}(X)$ and $C_{+}(Y)$, the topological homeomorphism between $X$ and $Y$, the ring isomorphism between $C(X)$ and $C(Y)$ and the graph isomorphism between $\Gamma(C(X))$ and $\Gamma(C(Y))$ are equivalent.
	\end{abstract}
\maketitle

\section{Introduction}

 In 1993, the structure of $C_{+}(X)$ was first studied by S. K. Acharyya, K. C. Chattopadhyay and G. G. Ray \cite{Acharyya1}. The collection $C_{+}(X)$ of all non-negative real valued continuous functions over a topological space $X$ forms a semiring with respect to pointwise addition and multiplication of functions (Roughly speaking, semirings are rings without the requirement of the additive inverses). 
 In 2019, E. M. Vechtomov {\it et. al.} in their paper \cite{Vechtomov}, reviewed all the results intensively studied by them and many other authors on the theory of semirings of continuous functions, which exhibit sustained research interest in this structure.
All such study on $C_{+}(X)$ is totally focused on two types of findings: one is the effect of the topological properties of $X$ on the algebraic structure of $C_{+}(X)$ and vice-versa. Many aspects of these dual study has helped to dig into the deeper area of research and we refer to \cite{Acharyya1}, \cite{Ray 1995}, \cite{Vechtomov} for one who wants to study further on this topic. One question which we think important, have not been nurtured during the course of the above mentioned study, is that whether the study of the algebraic structures is the only approach to characterize the corresponding topological spaces and vice-versa. To investigate this, we introduce the graph structure in $C_{+}(X)$ with an intention to study its connection with the algebraic structure of the semiring $C_{+}(X)$ and topological structure of $X$. 

Study of the graph structure on rings is not at all a new area of research. Beck introduced \cite{beck} the idea of zero divisor graph of a commutative ring with unity and later on many research, for instance \cite{anderson-livingston, ref1-1, ref1-7, ref1-4, ref1-5, ref1-3, ref1-10, ref1-11, ref1-13, ref1-14, ref1-15, ref1-16, ref1-17, ref1-21}, has been done in this area. In case of the ring of real valued continuous functions $C(X)$ on a topological space $X$, Azarpanah {\it et. al.} \cite{zero-divisor-c(x)} studied the zero divisor graph of $C(X)$. They obtained the conditions on $X$ under which the associated graph is triangulated, connected etc. However, as their work was a follow up of zero divisor graph of a ring, the main topological ingredients of characterizing the graph was missing. Apart from that Amini {\it et. al.} \cite{graph-ideal} also studied another graph structure on $C(X)$ from the co-maximal ideal point of view. Later in \cite{ref1}, B. Bose and A. Das introduced the zero-set intersection graph $\Gamma (C(X))$ on $C(X)$ with a view to associate the topological properties of $X$, the algebraic properties of $C(X)$ and the graph properties of $\Gamma (C(X))$. Recently, Acharyya {\it et. al.} studied the Zero-Divisor graph of the rings $C^{\mathcal{P}}(X)$ and $C^{\mathcal{P}}_{\infty}(X)$ \cite{Acharyya zero-divisor}. But the study of the graph on the semiring $C_{+}(X)$ where the algebraic structures are lesser than $C(X)$ is quite new as of our knowledge. In this paper we initiate the study of the `Zero-Set intersection graph' structure on $C_{+}(X)$ with a goal to characterize some topological properties of $X$ and semiring properties of $C_{+}(X)$.

At first in section 2, we recall some definitions and results related to graphs, rings and semirings. In section 3, we define the zero-set intersection graph of $C_{+}(X)$ and study few graph properties. Then we study the cliques, maximal cliques and prime cliques of $\Gamma(C_{+}(X))$ and their relations with the ideals, maximal ideals, prime ideals of $C_{+}(X)$ respectively. We show that the maximal cliques of $\Gamma(C_{+}(X))$ can be obtained from the maximal cliques of $\Gamma(C(X))$ and establish their connections with those of the graph $\Gamma(C(X))$. The precise form of maximal cliques have been obtained (see Theorem \ref{max2}). Results connecting maximal (prime) ideals of $C_{+}(X)$ to maximal (respectively, prime) cliques of $\Gamma(C_{+}(X))$ have been proved (see Theorems \ref{max3}, \ref{max5}, \ref{max6}, \ref{max4}, \ref{prime2}) analogous to those proved in the context of the graph $\Gamma(C(X))$ of $C(X)$ in \cite{ref1}. Also the neighbourhood properties of $\Gamma(C_{+}(X))$ are studied and it has been shown that the neighbourhood properties are preserved under the graph isomorphism (see Theorem \ref{4.2}). Finally, in section 4, the inter-relationships between graph isomorphisms, ring isomorphism, semiring isomorphisms and homeomorphisms of spaces have been studied and the desired equivalence of those has been established (see Theorem \ref{combined main result}). We obtain that the space $X$ is homeomorphic to the space $Y$ if and only if the graphs $\Gamma(C_{+}(X))$ and $\Gamma(C_{+}(Y))$ are isomorphic, which says that the graph structure on $C_{+}(X)$ essentially makes distinction between the spaces belonging to the class of all Hewitt spaces. For proving the equivalence in Theorem \ref{combined main result}, the result that plays a key role is Theorem \ref{small graph to big}, which tells us that for topological spaces $X$ and $Y$, if the subgraphs $\Gamma(C_{+}(X))$ and $\Gamma(C_{+}(Y))$ are isomorphic then the graphs $\Gamma(C(X))$ and $\Gamma(C(Y))$ are isomorphic. Also a combined picture of main results of section 4 is given.

\section{Preliminaries}

For clarity and make the paper self sufficient, we want to recall some preliminary facts about graph theory and semiring that will be used in the sequel.

For a non-empty set $V$ and a symmetric binary relation (possibly empty) $E$ on $V$, $G = (V,E)$ is called a \textit{graph}. The set $V$ is called the set of vertices and $E$ is called the set of edges of $G$. Two element $u$ and $v$ in $V$ are said to be \textit{adjacent} if $(u,v)\in E$. $H = (W, F)$ is called a \textit{subgraph} of $G$ if $H$ itself is a graph and $\phi\neq W\subseteq V$ and $F\subseteq E$. $G$ is said to be \textit{complete} if all the vertices of $G$ are pairwise adjacent. A complete subgraph of a graph $G$ is called a \textit{clique}. A clique which is maximal with respect to inclusion is called a \textit{maximal clique}. Two graphs $G = (V,E)$ and $G'=(V',E')$ are said to be \textit{isomorphic} if there exists a bijection $\phi: V \rightarrow V'$ such that $(u,v)\in E$ if and only if $(\phi(u), \phi(v))\in E'$. A \textit{path} of length $k$ in a graph is an alternating sequence of vertices and edges, $v_{0}, e_{0}, v_{1}, e_{1}, v_{2},... , v_{k-1}, e_{k-1}, v_{k}$, where $v_{i}'$s are distinct and $e_{i}$ is the edge joining $v_{i}$ and $v_{i+1}$. This is called a path joining $v_{0}$ and $v_{k}$. A path with $v_{0} = v_{k}$ is called \textit{cycle}. A cycle of length 3 is called a \textit{triangle}. A graph is \textit{connected} if for any pair of vertices $u,v\in V$, there exists a path joining $u$ and $v$. A graph is said to be \textit{triangulated} if for any vertex $u \in V$, there exists $v,w\in V$ such that $(u,v,w)$ is a triangle. The \textit{distance} between two vertices $u, v\in V$, $d(u,v)$ is defined as the length of the shortest path joining $u$ and $v$, if it exists. Otherwise $d(u, v)$ is defined as $\infty$. The \textit{diameter} of a graph is defined as $diam(G) = max_{u,v\in V} d(u, v)$, the largest distance between pairs of vertices of the graph, if it exists. Otherwise $diam(G)$ is defined as $\infty$. The \textit{girth} of a graph is the length of its shortest cycle. The \textit{neighbourhood} of a vertex $v$ of a graph $G$ is the induced subgraph of G consisting of all vertices adjacent to $v$. We assume the neighbourhood to be closed, i.e., the vertex $v$ is included in it and denote it by $N[v]$. A vertex $v$ is called \textit{simplicial} if $N[v]$ is a clique.

Let $S$ be a non-empty set and ‘+’ and ‘.’ be two binary operations on $S$, called addition and multiplication respectively. Then $(S,+, .)$
is called a {\it semiring} if
\\(i) $(S,+)$ is a commutative semigroup,
\\(ii) $(S,.)$ is a semigroup,
\\(iii) $a.(b + c) = a.b + a.c$ and $(b +c).a = b.a + c.a$, for all $a, b, c \in S$.
\\If there exists an element $0\in S$ such that $a + 0 = a$ for all $a \in S$ then $0$ is called additive neutral element or the zero of $S$ and $S$ is called a {\it semiring with zero}.
Moreover, if $a.0 = 0.a = 0$ for all $a \in S$ then $S$ is called a {\it semiring with absorbing zero}.
Again if there exists an element $1 \in S$ such that $a.1 = 1.a = a$ for all $a \in S$ then 1 is called multiplicative identity or simply
identity element of $S$ and $S$ is called a {\it semiring with identity}.
\\Further if $a.b = b.a$ for all $a, b \in S$ then $S$ is called a {\it commutative semiring}.\\
An {\it ideal} $I$ of a semiring $S$ is a nonempty subset of $S$ such that $a + b, as, sa\in I$ for all $a,b\in I$ and $s\in S$.\\
A proper ideal $I$ of a semiring $S$ is called {\it prime} if $ab\in I$ implies $a\in I$ or $b\in I$ for any elements $a, b\in S$.\\
A proper ideal $I$ of a semiring $S$ is called {\it maximal} if it is not contained in any other proper ideal of the semiring $S$.

We refer to the book of Golan \cite{Golan} for more definitions and results of semirings and mention few results that are taken from \cite{Vechtomov} and which will be used later.

\begin{remark}\cite{Vechtomov}
Let $C_{+}(X)$ be the set of all non-negative valued continuous functions over a topological space $X$. This set with pointwise addition and multiplication forms a semiring. It is easy to see that the ring $C(X)=C_{+}(X)-C_{+}(X)$ is a ring of differences of the semiring $C_{+}(X)$ and the semiring $C_{+}(X)$ coincides with the set of all squares of elements of the ring $C(X)$.

\end{remark}

\begin{proposition}\label{maximal ideal in C+(X)}\cite{Vechtomov}
Prime (maximal) ideals of the semiring $C_{+}(X)$ are precisely the ideals $P\cap C_{+}(X)$ for the prime (maximal) ideals $P$ of the ring $C(X)$.

\end{proposition}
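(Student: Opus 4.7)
I would prove the two inclusions of the claimed equality separately, relying on the identity $C(X) = C_+(X) - C_+(X)$ and the canonical decomposition $f = f^+ - f^-$ with $f^\pm \in C_+(X)$ for every $f \in C(X)$.

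For the forward direction, suppose $P$ is a prime (resp.\ maximal) ideal of $C(X)$ and set $Q := P \cap C_+(X)$. The set $Q$ inherits closure under addition and $C_+(X)$-multiplication from $P$, is proper because $1 \notin P$, and is prime because any $fg \in Q \subseteq P$ with $f, g \in C_+(X)$ forces one of $f, g$ into $P$, hence into $Q$. Maximality transfers by the same pull-back: a strict enlargement of $Q$ in $C_+(X)$ by some $h \in C_+(X) \setminus Q$ induces, via differences, a strict enlargement of $P$ inside $C(X)$, contradicting maximality of $P$.

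For the reverse direction, given a prime ideal $Q$ of $C_+(X)$, the natural candidate is the ideal $P := \{a - b : a, b \in Q\} \subseteq C(X)$. The $C_+(X)$-absorbency of $Q$ together with the expansion $c(a - b) = (c^+ a + c^- b) - (c^+ b + c^- a)$ for $c = c^+ - c^- \in C(X)$ shows $P$ is a proper ideal of $C(X)$, and $Q \subseteq P \cap C_+(X)$ is immediate from $a = a - 0$.

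The main obstacle is the reverse inclusion $P \cap C_+(X) \subseteq Q$: semiring ideals lack additive cancellation, so from $h = a - b \in C_+(X)$ with $a, b \in Q$ one cannot naively ``subtract $b$'' to conclude $h \in Q$. My plan is to resolve this by passing to the $z$-filter $\mathcal{Z}[Q] := \{Z(f) : f \in Q\}$ on $X$. Using the identities $Z(f + g) = Z(f) \cap Z(g)$ for $f, g \in C_+(X)$ and $Z(fg) = Z(f) \cup Z(g)$, together with primality of $Q$, one verifies that $\mathcal{Z}[Q]$ is a prime $z$-filter. For $h = a - b \geq 0$ with $a, b \in Q$ one then has $Z(h) \supseteq Z(a) \cap Z(b) \in \mathcal{Z}[Q]$, so $Z(h)$ lies in $\mathcal{Z}[Q]$; combining this with the fact that prime semiring ideals of $C_+(X)$ are determined by their $z$-filters (a structural property of $C_+(X)$ developed in the semiring literature) yields $h \in Q$. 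Primality of $P$ in $C(X)$ then transfers from primality of $\mathcal{Z}[Q]$, and the maximal case is cleanest via the classical bijection between $z$-ultrafilters on $X$ and maximal ideals, which holds analogously for both $C(X)$ and $C_+(X)$.
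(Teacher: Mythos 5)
The paper offers no proof of this proposition at all: it is quoted verbatim from Vechtomov--Mikhalev--Sidorov's survey, so there is no in-paper argument to compare against and your attempt has to stand on its own. Your forward direction is essentially sound. The one step you gloss over is the maximality transfer: to turn a proper enlargement $Q\subsetneq Q'$ in $C_{+}(X)$ into a proper enlargement of $P$ in $C(X)$ you need $P\subseteq Q'-Q'$, i.e.\ that every $f\in P$ has both $f\vee 0$ and $-(f\wedge 0)$ in $P\cap C_{+}(X)$. This is true, but only because $P$ is prime (maximal ideals of $C(X)$ are prime) and $(f\vee 0)\cdot(-(f\wedge 0))=0\in P$; you should say so, since it is exactly the kind of additive-cancellation issue you flag elsewhere.

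The reverse direction contains a genuine error. Your candidate $P=Q-Q$ is the right object, and you correctly isolate $P\cap C_{+}(X)\subseteq Q$ as the crux, but the lemma you invoke to close it --- that prime ideals of $C_{+}(X)$ are ``determined by their $z$-filters,'' i.e.\ that $Z(h)\in\mathcal{Z}[Q]$ forces $h\in Q$ --- is false. That property says $Q$ is a $z$-ideal, and prime ideals of $C_{+}(X)$ need not be $z$-ideals, for the same reason prime ideals of $C(X)$ need not be (Gillman--Jerison, Ch.~14, exhibits non-$z$ prime ideals in $C(\mathbb{R})$; intersecting them with $C_{+}(\mathbb{R})$ gives counterexamples in the semiring, granting the very proposition at issue). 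What actually rescues the step is \emph{convexity}, not the $z$-filter: prime ideals of $C_{+}(X)$ are convex. Indeed, if $0\leq u\leq v$ with $v\in Q$, then $u^{2}\leq v^{2}$, so the function equal to $u^{2}/v$ on the cozero-set of $v$ and to $0$ on $Z(v)$ is a well-defined element $w\in C_{+}(X)$ (continuity at points of $Z(v)$ follows from $0\leq u^{2}/v\leq v$), whence $u^{2}=wv\in Q$ and $u\in Q$ by primality. Applying this to $0\leq h=a-b\leq a$ with $a\in Q$ gives $h\in Q$ directly; your zero-set containment $Z(h)\supseteq Z(a+b)$ is true but is not the operative fact. (An alternative that avoids $Q-Q$ altogether: set $P=\{f\in C(X):f^{2}\in Q\}$, for which $P\cap C_{+}(X)=Q$ is immediate from primality since $C_{+}(X)$ is the set of squares in $C(X)$; convexity is then needed to show $P$ is closed under addition, via $(f+g)^{2}\leq 2f^{2}+2g^{2}$.) Finally, your disposal of the maximal case by asserting that the $z$-ultrafilter correspondence ``holds analogously for $C_{+}(X)$'' is essentially assuming the statement to be proved; it is cleaner to deduce maximality of $Q=M\cap C_{+}(X)$ from maximality of $M$ exactly as in your forward direction, and conversely to extend a maximal $Q$ to a proper ideal of $C(X)$ and pass to a maximal ideal containing it.
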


\begin{proposition}\cite{Vechtomov}
For any Tychonoff space $X$, the maximal ideals of the semiring $C_{+}(X)$ coincide with the ideals of the form $M^{p}=\{f\in C_{+}(X) : p\in
\overline{Z(f)}_{\beta X}\}, p\in \beta X$.
\end{proposition}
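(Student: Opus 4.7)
The plan is to deduce this directly from the preceding Proposition \ref{maximal ideal in C+(X)} combined with the classical Gelfand--Kolmogorov theorem describing the maximal ideals of $C(X)$. Specifically, by Gelfand--Kolmogorov (Gillman--Jerison), for a Tychonoff space $X$ the maximal ideals of the ring $C(X)$ are precisely the sets $\widetilde{M}^{p}=\{f\in C(X): p\in\overline{Z(f)}_{\beta X}\}$ as $p$ ranges over $\beta X$. Assuming this and the previous proposition, every maximal ideal of the semiring $C_{+}(X)$ is of the form $\widetilde{M}^{p}\cap C_{+}(X)$ for some $p\in\beta X$, and conversely every such intersection is a maximal ideal of $C_{+}(X)$. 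Since $\widetilde{M}^{p}\cap C_{+}(X)=\{f\in C_{+}(X): p\in\overline{Z(f)}_{\beta X}\}=M^{p}$, the desired description follows.

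In detail, I would first verify that $M^{p}$ is a well-defined ideal of $C_{+}(X)$: closure under addition uses $Z(f+g)\subseteq Z(f)\cap Z(g)$ when $f,g\in C_{+}(X)$ (which relies on non-negativity), and closure under multiplication by elements of $C_{+}(X)$ uses $Z(f)\subseteq Z(fg)$. Next, I would apply Proposition \ref{maximal ideal in C+(X)} to any maximal ideal $\mathcal{M}$ of $C_{+}(X)$ to obtain a maximal ideal $P$ of $C(X)$ with $\mathcal{M}=P\cap C_{+}(X)$. Invoking Gelfand--Kolmogorov, $P=\widetilde{M}^{p}$ for a unique $p\in\beta X$, yielding $\mathcal{M}=M^{p}$. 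Conversely, for each $p\in\beta X$, $\widetilde{M}^{p}$ is maximal in $C(X)$, so by the same proposition $\widetilde{M}^{p}\cap C_{+}(X)=M^{p}$ is maximal in $C_{+}(X)$.

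Finally, I would check that the parametrization $p\mapsto M^{p}$ is injective. For distinct $p,q\in\beta X$, complete regularity of $\beta X$ yields $h\in C(\beta X)$ with $h(p)=0$ and $h(q)=1$; taking $f=(h\restriction X)^{2}\in C_{+}(X)$ gives a function with $p\in\overline{Z(f)}_{\beta X}$ but $q\notin\overline{Z(f)}_{\beta X}$, so $M^{p}\neq M^{q}$. There is no serious obstacle here; the only delicate point is the passage from $C(X)$ to $C_{+}(X)$, which is handled by the observation that $Z(f)=Z(f^{2})$ and $f^{2}\in C_{+}(X)$, so the family of zero-sets arising from $C_{+}(X)$ coincides with that arising from $C(X)$.
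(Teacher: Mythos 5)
The paper states this proposition without proof, as a result quoted from Vechtomov et al., so there is no in-paper argument to compare against; judged on its own terms, your main derivation is correct and is surely the intended one: combine Proposition \ref{maximal ideal in C+(X)} with the Gelfand--Kolmogorov theorem ($P=\widetilde{M}^{p}$ for maximal $P\subseteq C(X)$) and observe that $\widetilde{M}^{p}\cap C_{+}(X)=M^{p}$ by definition. Two caveats, though. First, your verification that $M^{p}$ is an ideal is both redundant and slightly misstated. It is redundant because the intersection of an ideal of $C(X)$ with the sub-semiring $C_{+}(X)$ is automatically an ideal of $C_{+}(X)$, which your main argument already delivers. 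And the inclusion you invoke, $Z(f+g)\subseteq Z(f)\cap Z(g)$, points the wrong way for closure under addition: what is needed is that $p\in\operatorname{cl}_{\beta X}Z(f)\cap\operatorname{cl}_{\beta X}Z(g)$ implies $p\in\operatorname{cl}_{\beta X}Z(f+g)$, and since $Z(f+g)=Z(f)\cap Z(g)$ for non-negative $f,g$, this rests on the Gillman--Jerison fact that $\operatorname{cl}_{\beta X}(Z_{1}\cap Z_{2})=\operatorname{cl}_{\beta X}Z_{1}\cap\operatorname{cl}_{\beta X}Z_{2}$ for zero-sets $Z_{1},Z_{2}$ of $X$, not on a mere containment of zero-sets.

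Second, and more substantively, your injectivity argument is flawed: from $h\in C(\beta X)$ with $h(p)=0$ it does \emph{not} follow that $p\in\operatorname{cl}_{\beta X}Z(h|_{X})$, because a zero-set of $\beta X$ passing through $p$ may miss $X$ entirely. For instance, $\beta\NN\setminus\NN$ is a nonempty zero-set of $\beta\NN$ disjoint from $\NN$; if $h$ vanishes exactly there and $h(q)=1$ for some $q\in\NN$, your $f=(h|_{X})^{2}$ has empty zero-set and lies in no $M^{p}$ at all. The correct route to injectivity is the one in Gillman--Jerison 7.3: distinct points of $\beta X$ lie in the closures of two \emph{disjoint} zero-sets of $X$, and these closures are disjoint. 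Fortunately the proposition as stated only asserts that the maximal ideals are the sets of the form $M^{p}$, not that the parametrization $p\mapsto M^{p}$ is injective, so this gap does not affect the claimed conclusion.
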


\begin{proposition}\cite{Vechtomov}
Any prime ideal of a semiring $C_{+}(X)$ is contained in a unique maximal ideal.
\end{proposition}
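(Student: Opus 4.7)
The plan is to transport the problem to the ring $C(X)$ using Proposition \ref{maximal ideal in C+(X)} and then invoke the classical fact that every prime ideal of $C(X)$ lies beneath a unique maximal ideal of $C(X)$. So I would start with a prime ideal $Q$ of the semiring $C_{+}(X)$ and, by Proposition \ref{maximal ideal in C+(X)}, choose a prime ideal $P$ of $C(X)$ with $Q = P \cap C_{+}(X)$. Letting $M$ denote the unique maximal ideal of $C(X)$ containing $P$, the ideal $M \cap C_{+}(X)$ is maximal in $C_{+}(X)$ (again by Proposition \ref{maximal ideal in C+(X)}) and contains $Q$. This settles existence.

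For uniqueness, suppose $N$ is any maximal ideal of $C_{+}(X)$ with $Q \subseteq N$. Using Proposition \ref{maximal ideal in C+(X)} once more, write $N = M' \cap C_{+}(X)$ for some maximal ideal $M'$ of $C(X)$. Here the key device is the observation recorded in the remark preceding Proposition \ref{maximal ideal in C+(X)}, namely that $C_{+}(X)$ is precisely the set of squares of elements of $C(X)$. For any $f \in P$ one then has $f^{2} \in P \cap C_{+}(X) = Q \subseteq M' \cap C_{+}(X)$, and the primality of $M'$ forces $f \in M'$. Hence $P \subseteq M'$, and by uniqueness of the maximal ideal of $C(X)$ above $P$ we conclude $M' = M$, so $N = M \cap C_{+}(X)$.

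The only nontrivial ingredient is the classical statement for $C(X)$ itself: that $C(X)/P$ is a totally ordered integral domain whose subset of elements bounded (in absolute value) by a constant modulo $P$ is the unique maximal ideal containing $P$. I would cite this from Gillman--Jerison rather than reprove it. Once that fact is in hand, the whole argument is essentially mechanical, with the squaring trick $f \mapsto f^{2}$ being the pivot that lets primality in $C(X)$ control containment in $C_{+}(X)$; no genuinely new obstacle arises in passing from the ring to the semiring setting.
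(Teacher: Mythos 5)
Your argument is correct. Note, however, that the paper offers no proof of this proposition at all: it is quoted verbatim from the Vechtomov--Mikhalev--Sidorov survey (Proposition 2.4 there) and used as a black box, so there is no in-paper proof to compare against. Your derivation is the natural one: existence follows from the correspondence $Q = P \cap C_{+}(X)$ together with the classical Gillman--Jerison fact that each prime ideal of $C(X)$ sits under a unique maximal ideal, and for uniqueness the squaring step $f \mapsto f^{2}$ is exactly the right device --- $f^{2} \in Q \subseteq M'$ plus primality of the maximal ideal $M'$ of $C(X)$ gives $P \subseteq M'$, forcing $M' = M$ and hence $N = M \cap C_{+}(X)$. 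Every step checks out (in particular $f^{2} \in P \cap C_{+}(X)$ is immediate since $P$ is an ideal and $f^{2} \geq 0$), and you correctly isolate the one nontrivial external ingredient as the uniqueness statement for $C(X)$, which is legitimately cited rather than reproved.
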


\begin{theorem}\cite{Ray 1995}\label{ring iso iff semiring iso}
Let the topological spaces $X$ and $Y$ be realcompact spaces. Then $X$ is homeomorphic to $Y$ if and only if $C_{+}(X)$ is isomorphic to $C_{+}(Y)$.
\end{theorem}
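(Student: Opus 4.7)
The plan is to reduce this to the classical Hewitt theorem for the ring $C(X)$, which asserts that for realcompact $X$ and $Y$ one has $X \cong Y$ iff $C(X) \cong C(Y)$ as rings. The forward direction is trivial: a homeomorphism $h : X \to Y$ induces a semiring isomorphism $C_{+}(Y) \to C_{+}(X)$ by $f \mapsto f \circ h$, since composition preserves non-negativity, pointwise sums and pointwise products. So the substance lies in the converse.

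For the converse, I would start from a semiring isomorphism $\varphi : C_{+}(X) \to C_{+}(Y)$ and lift it to a ring isomorphism $\widetilde{\varphi} : C(X) \to C(Y)$, exploiting the fact recorded in the first Remark that $C(X) = C_{+}(X) - C_{+}(X)$ is the ring of formal differences of $C_{+}(X)$. Concretely, define $\widetilde{\varphi}(f - g) = \varphi(f) - \varphi(g)$ for $f,g \in C_{+}(X)$. Well-definedness is the first thing to check: if $f_1 - g_1 = f_2 - g_2$ in $C(X)$, then $f_1 + g_2 = f_2 + g_1$ in $C_{+}(X)$, so applying $\varphi$ (which respects the additive semigroup structure) gives $\varphi(f_1) + \varphi(g_2) = \varphi(f_2) + \varphi(g_1)$, i.e.\ $\varphi(f_1) - \varphi(g_1) = \varphi(f_2) - \varphi(g_2)$ in $C(Y)$. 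Additivity is immediate, and multiplicativity reduces to the identity $(f_1 - g_1)(f_2 - g_2) = (f_1 f_2 + g_1 g_2) - (f_1 g_2 + g_1 f_2)$, which stays inside the cone $C_{+}(X)$ summand by summand and is therefore preserved by $\varphi$.

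Next I would verify that $\widetilde{\varphi}$ is a ring isomorphism. Injectivity: if $\widetilde{\varphi}(f-g) = 0$ then $\varphi(f) = \varphi(g)$, and since $\varphi$ is injective on $C_{+}(X)$ we get $f = g$, hence $f - g = 0$. Surjectivity: any $h \in C(Y)$ decomposes as $h = h^{+} - h^{-}$ with $h^{\pm} \in C_{+}(Y)$, and since $\varphi$ is surjective there exist $f^{\pm} \in C_{+}(X)$ with $\varphi(f^{\pm}) = h^{\pm}$, giving $h = \widetilde{\varphi}(f^{+} - f^{-})$. Thus $C(X) \cong C(Y)$ as rings. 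Invoking the classical Hewitt theorem for realcompact spaces then yields a homeomorphism $X \cong Y$.

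The main subtlety I expect is the well-definedness and multiplicativity of the extension $\widetilde{\varphi}$: one must use only the semiring axioms (no additive inverses inside $C_{+}(X)$) while manipulating differences that live in the larger ring, and the multiplicativity identity above is exactly the bookkeeping trick that makes this go through. Once the extension is in place, everything else is either the classical theorem or routine. An alternative route would be to bypass $C(X)$ entirely and reconstruct $X$ directly from the maximal spectrum of $C_{+}(X)$ using the description $M^{p} = \{f \in C_{+}(X) : p \in \overline{Z(f)}_{\beta X}\}$ from the third Proposition, but the difference-ring approach is shorter because it leverages results already in the literature.
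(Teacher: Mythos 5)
The paper does not prove this statement at all: it is quoted verbatim from G.~G.~Ray's dissertation \cite{Ray 1995} and used as a black box, so there is no in-paper argument to compare yours against. That said, your proposal is correct and is the natural way to establish the result. The forward direction by composition is fine; for the converse, your extension $\widetilde{\varphi}(f-g)=\varphi(f)-\varphi(g)$ to the difference ring $C(X)=C_{+}(X)-C_{+}(X)$ is well defined exactly as you argue (the cancellation $\varphi(f_1)+\varphi(g_2)=\varphi(f_2)+\varphi(g_1)\Rightarrow\varphi(f_1)-\varphi(g_1)=\varphi(f_2)-\varphi(g_2)$ takes place in the ring $C(Y)$, where additive cancellation is available), the multiplicativity identity $(f_1-g_1)(f_2-g_2)=(f_1f_2+g_1g_2)-(f_1g_2+g_1f_2)$ keeps both terms in the positive cone as needed, and surjectivity via $h=h^{+}-h^{-}$ is correct. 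One small point worth making explicit: injectivity of $\widetilde{\varphi}$ requires not just $f-g=0\Rightarrow$ contradiction but that $f_1-g_1=f_2-g_2$ has a unique image, which is already your well-definedness check run backwards using that $\varphi^{-1}$ is also additive; as written your injectivity argument only treats the kernel, which suffices for a group homomorphism and hence is fine here. With $C(X)\cong C(Y)$ in hand, the Hewitt isomorphism theorem for realcompact spaces finishes the proof. This is almost certainly the intended argument behind the cited result, and the paper itself implicitly relies on the same circle of ideas in Theorem \ref{big graph to small}.
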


\section{Zero-Set Intersection Graph of $C_{+}(X)$}

In this section we initiate the study of zero-set intersection graph of the semiring $C_{+}(X)$ for a Tychonoff space $X$, by adopting the definition of the Zero-set intersection graph introduced in \cite{ref1} on $C(X)$.

First we introduce few notions on $C_{+}(X)$.

\begin{definition}\label{definition of units}
An element $f\in C_{+}(X)$ is called an \textit{unit} of $C_{+}(X)$ if there exists $g\in C_{+}(X)$ such that $fg=\mathbf{1}$, where $\mathbf{1}(x)=1$ for all $x\in X$. Equivalently, the units of $C_{+}(X)$ are all such functions that does not attain zero at any point in $X$.
\end{definition}

\begin{definition}\label{definition of graph}
Let $\mathcal{N'}(X)$ be the set of all non-units in the semiring $(C_{+}(X),+,.)$ ($\mathcal{N'}(X)=\mathcal{N}(X)\cap C_{+}(X)$, where $\mathcal{N}(X)$ is the set of all non-units in $C(X)$). By \textit{zero-set intersection graph} $\Gamma(C_{+}(X))$, we mean the graph whose set of vertices is $\mathcal{N'}(X)$ and there is an edge between distinct vertices $f$ and $g$ if $Z(f)\cap Z(g)\neq\emptyset$, where $Z(f)=\{x\in X:f(x)=0\}$.

\end{definition}

In \cite{ref1} authors observed the graph properties of the graph $\Gamma(C(X))$ of $C(X)$ viz., connectedness, diameter, girth etc. Throughout this study we have considered the graph of $C_{+}(X)$ as a subgraph of the zero-set intersection graph of $C(X)$. As a consequence, the graph $\Gamma(C_{+}(X))$ inherits some of the graph properties of $\Gamma(C(X))$ which are as follows:

\begin{itemize}
    \item[(i)]  $\Gamma(C_{+}(X))$ is {\it connected}
    \item[(ii)] $\Gamma(C_{+}(X))$ is {\it triangulated}
    \item[(iii)] $diam(\Gamma(C_{+}(X)))=2$
    \item[(iv)] {\it girth} of $\Gamma(C_{+}(X))$ is $3$
\end{itemize}

\subsection{Maximal Cliques in $\Gamma(C_{+}(X))$:}

In this subsection we study the cliques and maximal cliques in $\Gamma(C_{+}(X))$ and their relations with ideals, maximal ideals of $C_{+}(X)$. We characterize the maximal cliques in $\Gamma(C_{+}(X))$ and study some results on those.

\begin{theorem}\label{clique obs}
(1) Every ideal in the semiring $C_{+}(X)$ is a clique in $\mathcal N'(X)$.
\\(2) If $A$ is a clique in $\Gamma(C(X))$ then $A\cap C_{+}(X)$ is a clique in $\Gamma(C_{+}(X))$.
\\(3) If $A$ is a clique in $\Gamma(C_{+}(X))$ then $A$ is a clique in $\Gamma(C(X))$.

\end{theorem}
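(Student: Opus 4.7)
The plan is to establish all three parts by exploiting two simple observations. First, a function $f \in C_{+}(X)$ is a unit of $C_{+}(X)$ if and only if it is a unit of $C(X)$: both conditions amount to $f$ being nowhere zero, since for a strictly positive $f$ the reciprocal $1/f$ already lies in $C_{+}(X)$. Consequently, $\mathcal{N'}(X)$ is precisely $\mathcal{N}(X) \cap C_{+}(X)$ as stated, and the graph $\Gamma(C_{+}(X))$ sits inside $\Gamma(C(X))$ as an induced subgraph. Second, for non-negative continuous functions $f, g$ one has $Z(f+g) = Z(f) \cap Z(g)$, because a sum of two non-negative reals vanishes iff both summands vanish.

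For part (1), let $I$ be an ideal of $C_{+}(X)$, taken to be proper (otherwise $I$ contains $\mathbf{1}$ and fails to sit inside $\mathcal{N'}(X)$ to begin with). Being a proper ideal in a commutative semiring with identity, $I$ contains no units, so $I \subseteq \mathcal{N'}(X)$. For any two distinct $f, g \in I$, closure under addition gives $f + g \in I$, whence $f + g$ is a non-unit, so $Z(f+g) \neq \emptyset$. By the second observation, $Z(f) \cap Z(g) = Z(f+g) \neq \emptyset$, and thus $f$ and $g$ are adjacent in $\Gamma(C_{+}(X))$.

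For part (2), let $A$ be a clique in $\Gamma(C(X))$. Every $f \in A \cap C_{+}(X)$ is a non-unit of $C(X)$ and hence, by the first observation, a non-unit of $C_{+}(X)$, so $f \in \mathcal{N'}(X)$. For any two distinct $f, g \in A \cap C_{+}(X)$, the clique condition in $\Gamma(C(X))$ gives $Z(f) \cap Z(g) \neq \emptyset$, which is precisely the adjacency condition in $\Gamma(C_{+}(X))$. Part (3) is the converse direction and is almost immediate: a clique $A$ in $\Gamma(C_{+}(X))$ consists of elements of $\mathcal{N'}(X) \subseteq \mathcal{N}(X)$, and the same zero-set intersection condition defines adjacency in both graphs, so $A$ is automatically a clique in $\Gamma(C(X))$.

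There is no real obstacle anywhere in this argument; the only conceptual point worth emphasizing is that part (1) genuinely uses non-negativity via the identity $Z(f+g) = Z(f) \cap Z(g)$. The analogous statement fails for arbitrary ideals of the ring $C(X)$, because there a sum of two non-units can be a unit (for instance $f + (\mathbf{1} - f) = \mathbf{1}$), so additive closure alone does not force intersecting zero sets. This is precisely the feature that makes the clique structure of $\Gamma(C_{+}(X))$ cleaner than that of $\Gamma(C(X))$.
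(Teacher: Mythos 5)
Your proposal is correct and follows essentially the same route as the paper: part (1) rests on the identity $Z(f)\cap Z(g)=Z(f+g)$ for non-negative functions, and parts (2) and (3) reduce to the observation that $\Gamma(C_{+}(X))$ is the induced subgraph of $\Gamma(C(X))$ on $\mathcal{N'}(X)=\mathcal{N}(X)\cap C_{+}(X)$. The paper dismisses (2) and (3) as obvious and leaves the properness of the ideal and the non-emptiness of $Z(f+g)$ implicit; your write-up simply makes these points explicit.
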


\begin{proof}
(1) If $I$ is an ideal of $C_{+}(X)$ and any two function $f,g\in I$. Then $Z(f)\cap Z(g)=Z(f+g)$. It shows that $I$ is a clique in $\mathcal N'(X)$. \\
(2) and (3) are obvious.

\end{proof}

\begin{theorem}
No maximal clique in $\Gamma(C_{+}(X))$ is a maximal clique in $\Gamma(C(X))$.    
\end{theorem}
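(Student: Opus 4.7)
The plan is to leverage Theorem \ref{clique obs}(3) together with a negation trick: given a maximal clique $A$ in $\Gamma(C_{+}(X))$, produce a single function $h \in \mathcal{N}(X) \setminus A$ whose zero-set meets $Z(g)$ for every $g \in A$. Then, since Theorem \ref{clique obs}(3) already tells us that $A$ itself is a clique in $\Gamma(C(X))$, the augmented set $A \cup \{h\}$ is a strictly larger clique in $\Gamma(C(X))$, witnessing that $A$ is not maximal there.

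First I would verify that a maximal clique $A$ in $\Gamma(C_{+}(X))$ contains the constant $\mathbf{0}$ and also some $f \neq \mathbf{0}$. The first is immediate from $Z(\mathbf{0}) = X$: the zero function is adjacent to every other non-unit, so any clique missing it can be enlarged, violating maximality. The second holds whenever $\mathcal{N}'(X)$ has more than one vertex; the single-vertex case (e.g. $X$ a one-point space) is degenerate and not the setting of interest.

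Next, fixing such a non-zero $f \in A$, I would set $h := -f \in C(X)$. Since $f \in C_{+}(X) \setminus \{\mathbf{0}\}$ attains a strictly positive value somewhere on $X$, the function $h$ takes a strictly negative value there, hence $h \notin C_{+}(X)$ and in particular $h \notin A$. At the same time $Z(h) = Z(f) \neq \emptyset$, so $h$ is a non-unit of $C(X)$ and thus a vertex of $\Gamma(C(X))$. For any $g \in A$, the intersection $Z(h) \cap Z(g) = Z(f) \cap Z(g)$ is nonempty: if $g \neq f$ this is the clique condition satisfied by $A$ in $\Gamma(C_{+}(X))$, and if $g = f$ the intersection equals $Z(f)$ itself. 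Therefore $h$ is adjacent in $\Gamma(C(X))$ to every element of $A$, and $A \cup \{h\}$ is a clique in $\Gamma(C(X))$ strictly larger than $A$.

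There is no serious obstacle; the only care required is the bookkeeping above, ensuring $h \notin A$ and that $Z(h)$ meets $Z(g)$ for every $g \in A$ including $g = f$. The construction $h = -f$ has no analogue inside $C_{+}(X)$ precisely because it exploits the additive inverses of the ring $C(X)$, which explains conceptually why a clique saturated by non-negative functions can always be strictly extended inside $\Gamma(C(X))$.
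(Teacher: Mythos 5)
Your proposal is correct and takes essentially the same route as the paper: the paper also extends a maximal clique $M$ of $\Gamma(C_{+}(X))$ to the strictly larger clique $M\cup\{-f\}$ in $\Gamma(C(X))$. Your version is in fact slightly more careful, since the paper negates an arbitrary $f\in M$ and overlooks that $-\mathbf{0}=\mathbf{0}$ already lies in $M$, whereas you explicitly select a nonzero $f$ (and flag the degenerate one-vertex case where the statement itself fails).
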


\begin{proof}
For every continuous function $f$ belonging to a maximal clique $M$ in $\Gamma(C_{+}(X))$, the function $(-f)(\in C(X))$ does not belong to $M$, where $(-f)(x)=-f(x),x\in X$. So $M\cup \{-f\}$ is again a clique in $\Gamma(C(X))$ containing $M$. Hence the result.
\end{proof}

From Theorem \ref{clique obs} (2) we see that for any maximal clique $M$ in $\Gamma(C(X))$, $M\cap C_{+}(X)$ is a clique in $\Gamma(C_{+}(X))$. The next result additionally shows that $M\cap C_{+}(X)$ is a maximal clique in $\Gamma(C_{+}(X))$ as well.

\begin{theorem}\label{max1}
If $M$ is a maximal clique in $\Gamma(C(X))$ then $M\cap C_{+}(X)$ is a maximal clique in $\Gamma(C_{+}(X))$.
\end{theorem}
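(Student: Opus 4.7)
The plan is to argue by contradiction, leveraging the squaring trick $h \mapsto h^2$ to lift information from $\Gamma(C_+(X))$ back into $\Gamma(C(X))$.

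First, by Theorem \ref{clique obs}(2), $M \cap C_+(X)$ is automatically a clique in $\Gamma(C_+(X))$, so only maximality needs to be checked. Suppose, toward a contradiction, that there exists $g \in \mathcal N'(X) \setminus (M \cap C_+(X))$ such that $Z(g) \cap Z(f) \neq \emptyset$ for every $f \in M \cap C_+(X)$. Since $g \in C_+(X)$, the hypothesis $g \notin M \cap C_+(X)$ is equivalent to $g \notin M$.

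The crucial observation is that for every $h \in M$, the function $h^2$ lies in $C_+(X)$, satisfies $Z(h^2) = Z(h)$, and is therefore a non-unit adjacent (in $\Gamma(C(X))$) to every other element of $M$. By maximality of $M$ as a clique in $\Gamma(C(X))$, we conclude $h^2 \in M$, and hence $h^2 \in M \cap C_+(X)$. Now apply the assumption on $g$ to the element $h^2 \in M \cap C_+(X)$: we get
\[
\emptyset \neq Z(g) \cap Z(h^2) = Z(g) \cap Z(h).
\]
Thus $g$ is adjacent to every $h \in M$ inside $\Gamma(C(X))$, which means $M \cup \{g\}$ is a clique in $\Gamma(C(X))$ strictly larger than $M$ (since $g \notin M$), contradicting the maximality of $M$ in $\Gamma(C(X))$.

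The only minor subtlety I foresee is keeping track of the distinction between ``non-unit in $C_+(X)$'' and ``non-unit in $C(X)$'' — but Definition \ref{definition of units} makes this transparent: both notions are equivalent for $f \in C_+(X)$ to the statement $Z(f) \neq \emptyset$, so $g$ is simultaneously a vertex of $\Gamma(C_+(X))$ and of $\Gamma(C(X))$, and $h^2$ is a vertex of both whenever $h \in M$ is. No other obstacles appear, and the proof is essentially this one contradiction.
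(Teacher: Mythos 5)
Your proof is correct and follows essentially the same route as the paper's: both reduce maximality to the squaring trick $Z(h)=Z(h^2)$, showing that adjacency to all of $M\cap C_{+}(X)$ forces adjacency to all of $M$ and hence membership in $M$. Your version is in fact slightly more careful, since you explicitly justify the step $h^2\in M$ (via maximality of $M$ in $\Gamma(C(X))$), which the paper asserts without comment.
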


\begin{proof}
Let $M$ be a maximal clique in $\Gamma(C(X))$. Then $M\cap C_{+}(X)$ is a clique in $\Gamma(C_{+}(X))$. Let us suppose that $M\cap C_{+}(X)$ is not a maximal clique in $\Gamma(C_{+}(X))$. Then there exists $f\in C_{+}(X)\setminus M$ such that $f$ is adjacent to every element of $M\cap C_{+}(X)$. Now, for all $h\in M$, $h^{^2}\in M\cap C_{+}(X)$ and also $Z(h)=Z(h^{2})$. Therefore if for all $g\in M\cap C_{+}(X)$, $Z(f)\cap Z(g)\neq\emptyset$ then it implies that $Z(f)\cap Z(h)\neq\emptyset$ for all $h\in M$. Hence it follows that $f$ is adjacent to every element of $M$. So by maximality of $M$, $f\in M$, which is a contradiction to our assumption. Therefore $M\cap C_{+}(X)$ is a maximal clique in $\Gamma(C_{+}(X))$. 
\end{proof}

In Remark 3.4 of \cite{ref1}, the classification of the maximal cliques in $\Gamma(C(X))$ into three different categories, viz., fixed ideal, free ideal and non-ideal form, has been made. Here in the following theorem we characterize the precise form of the maximal cliques in $\Gamma(C_{+}(X))$ via the maximal cliques in $\Gamma(C(X))$, applying which it is easy to classify the maximal cliques in $\Gamma(C_{+}(X))$ as well.

\begin{theorem}\label{max2}
Any maximal clique in $\Gamma(C_{+}(X))$ is of the form $M\cap C_{+}(X)$, where $M$ is a maximal clique in $\Gamma(C(X))$.
\end{theorem}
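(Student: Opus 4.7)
The plan is to show the inclusion in both directions by leveraging the already-established Theorem \ref{max1}, together with a routine Zorn's lemma extension argument that embeds a maximal clique of $\Gamma(C_{+}(X))$ into a maximal clique of $\Gamma(C(X))$.

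First I would take an arbitrary maximal clique $K$ in $\Gamma(C_{+}(X))$. By Theorem \ref{clique obs}(3), $K$ is also a clique in the larger graph $\Gamma(C(X))$. Next I would invoke Zorn's lemma on the poset of cliques in $\Gamma(C(X))$ that contain $K$, ordered by inclusion. The key observation here is that the union of a chain of cliques is again a clique: any two vertices in the union lie in some common member of the chain, hence are adjacent. Thus we obtain a maximal clique $M$ in $\Gamma(C(X))$ with $K \subseteq M$.

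From $K \subseteq M$ and $K \subseteq C_{+}(X)$, we immediately have $K \subseteq M \cap C_{+}(X)$. By Theorem \ref{max1}, $M \cap C_{+}(X)$ is itself a maximal clique in $\Gamma(C_{+}(X))$. Since $K$ was already maximal in $\Gamma(C_{+}(X))$ and is contained in another maximal clique $M \cap C_{+}(X)$, the two must coincide, i.e., $K = M \cap C_{+}(X)$, which is the desired representation.

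There is essentially no serious obstacle; the nontrivial work has already been absorbed into Theorem \ref{max1} (which handled the squaring trick $Z(h)=Z(h^{2})$ needed to transfer adjacency information between $C(X)$ and $C_{+}(X)$). The only point requiring slight care is the Zorn step, where one must check that chains of cliques have cliques as unions, but this is immediate from the binary nature of the adjacency relation.
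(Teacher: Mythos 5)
Your argument is correct and follows essentially the same route as the paper's own proof: view the maximal clique of $\Gamma(C_{+}(X))$ as a clique of $\Gamma(C(X))$, extend it to a maximal clique $M$ there, and conclude via Theorem \ref{max1} that it must equal $M\cap C_{+}(X)$. The only difference is that you spell out the Zorn's lemma step that the paper leaves implicit.
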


\begin{proof}
Let $N$ be a maximal clique in $\Gamma(C_{+}(X))$. Then $N$ is also a clique in $\Gamma(C(X))$ which is contained in some maximal clique $M$ in $\Gamma(C(X))$ (say). Therefore $N$ is contained in $M\cap C_{+}(X)$, which is a maximal clique in $\Gamma(C_{+}(X))$ by Theorem \ref{max1}. Hence $N=M\cap C_{+}(X)$.
\end{proof}

We have seen that ideals of $C_{+}(X)$ are cliques in $\Gamma(C_{+}(X))$ (Theorem \ref{clique obs}). Also we obtain the maximal cliques of $\Gamma(C_{+}(X))$ from the maximal cliques of $\Gamma(C(X))$. In the next theorem we find that the maximal ideals of $C_{+}(X)$ are maximal cliques in $\Gamma(C_{+}(X))$.

\begin{theorem}\label{max3}
Maximal ideals of $C_{+}(X)$ are maximal cliques in $\Gamma(C_{+}(X))$.
\end{theorem}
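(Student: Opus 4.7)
The plan is to reduce this to Theorem \ref{max1} via the description of maximal ideals of $C_{+}(X)$ given in Proposition \ref{maximal ideal in C+(X)}. Let $M$ be a maximal ideal of the semiring $C_{+}(X)$. By Theorem \ref{clique obs}(1), $M$ is already a clique in $\Gamma(C_{+}(X))$, so the only issue is maximality as a clique.

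By Proposition \ref{maximal ideal in C+(X)}, there is a maximal ideal $P$ of the ring $C(X)$ with $M = P \cap C_{+}(X)$. I would invoke (or quickly recall from \cite{ref1}) the fact that every maximal ideal of $C(X)$ is a maximal clique in $\Gamma(C(X))$: indeed, $P$ is a clique by the analogue of Theorem \ref{clique obs}(1) for $C(X)$, and if some $h \in C(X) \setminus P$ were adjacent in $\Gamma(C(X))$ to every element of $P$, then writing $P = M^{p}$ for some $p \in \beta X$ forces $p \notin \overline{Z(h)}^{\beta X}$; by normality of $\beta X$ one can separate $p$ from $\overline{Z(h)}^{\beta X}$ by a function $g \in P$ with $Z(g) \cap Z(h) = \emptyset$, contradicting adjacency. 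Thus $P$ is a maximal clique of $\Gamma(C(X))$.

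Once $P$ is known to be a maximal clique in $\Gamma(C(X))$, Theorem \ref{max1} applies directly and gives that $P \cap C_{+}(X) = M$ is a maximal clique in $\Gamma(C_{+}(X))$, which is the desired conclusion.

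The only subtle point is verifying that maximal ideals of $C(X)$ are maximal cliques in $\Gamma(C(X))$, and this is essentially a standard $\beta X$-separation argument; the authors may simply cite it from \cite{ref1}. If instead a direct argument is preferred, one can run the same separation argument inside $C_{+}(X)$: assume $f \in C_{+}(X) \setminus M$ is adjacent to every $g \in M$, use $M = M^{p} \cap C_{+}(X)$ to get $p \notin \overline{Z(f)}^{\beta X}$, extract $g_{0} \in C_{+}(X)$ with $p \in \overline{Z(g_{0})}^{\beta X}$ and $Z(g_{0}) \cap Z(f) = \emptyset$ (for instance by squaring a separating function), and obtain a contradiction. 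Either route is short; the main conceptual step is recognizing that Proposition \ref{maximal ideal in C+(X)} plus Theorem \ref{max1} make the passage from the ring picture to the semiring picture essentially automatic.
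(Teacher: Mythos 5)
Your proposal is correct and follows essentially the same route as the paper: the paper also writes the maximal ideal as $P\cap C_{+}(X)$ via Proposition \ref{maximal ideal in C+(X)}, cites Theorem 3.2 of \cite{ref1} for the fact that maximal ideals of $C(X)$ are maximal cliques in $\Gamma(C(X))$, and then applies Theorem \ref{max1}. The only difference is that you additionally sketch the $\beta X$-separation argument behind the cited fact, which the paper simply quotes.
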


\begin{proof} Clearly every maximal ideal of $C_{+}(X)$ is a clique. Let $P$ be a maximal ideal of $C_{+}(X)$. Then $P=M\cap C_{+}(X)$, where $M$ is a maximal ideal of $C(X)$ (see Proposition \ref{maximal ideal in C+(X)}). Therefore using Theorem 3.2 of \cite{ref1} we have, $M$ is a maximal clique in $\Gamma(C(X))$, whence it follows that $M\cap C_{+}(X)=P$ is a maximal clique in $\Gamma(C_{+}(X))$ by Theorem \ref{max1}. Hence the proof.
\end{proof}

The converse of the above theorem is not true in general. The following example exhibit that.

\begin{example}\label{exm1}
Let $X =\beta \mathbb{N}$ with its usual topology. Let $M_{1,2}, M_{2,3}, M_{1,3}$ be the collection of all real valued continuous functions which vanish at $\{1, 2\}, \{2, 3\},$
$\{1, 3\}$ respectively. Now if we denote $M^{+}_{1,2}=M_{1,2}\cap C_{+}(X)$, $M^{+}_{2,3}=M_{2,3}\cap C_{+}(X)$, $M^{+}_{1,3}=M_{1,3}\cap C_{+}(X)$ then $M^{+} = M^{+}_{1,2}\cup M^{+}_{2,3}\cup M^{+}_{1,3}$ is a maximal clique which is not a maximal ideal in $C_{+}(X)$.  
\end{example}

Every clique is contained in a maximal clique. But that may not be unique. The following example shows that.

\begin{example}
If we take the same $X$ as Example \ref{exm1} then $M^{+}_{1,2}$ is a clique contained in two different maximal cliques $M^{+}_{1} = M^{+}_{1,2}\cup M^{+}_{2,3}\cup M^{+}_{1,3}$ and 
\\$M^{+}_{2} = M^{+}_{1,2}\cup M^{+}_{2,4}\cup M^{+}_{1,4}$, where $M^{+}_{i,j}$ is the collection of all non-negative valued continuous function which vanishes at $\{i, j\}$, for $i,j=1,2,3,4$.
\end{example}

The following Theorems \ref{max5}, \ref{max6} are the $\Gamma(C_{+}(X))$ counterparts of Theorems 3.6 and 3.11 of \cite{ref1} and deal with the characterization of the structure of the maximal cliques in $\Gamma(C_{+}(X))$ via the maximal ideals of $C_{+}(X)$.

\begin{theorem}\label{max5}
Let $M$ be a maximal clique in $\Gamma(C_{+}(X))$. Then $M$ always contains an ideal of $C_{+}(X)$.
\end{theorem}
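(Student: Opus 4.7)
The plan is to reduce the problem to the already-established $C(X)$-level statement via Theorem \ref{max2}. Given a maximal clique $M$ in $\Gamma(C_{+}(X))$, Theorem \ref{max2} produces a maximal clique $N$ in $\Gamma(C(X))$ with $M = N \cap C_{+}(X)$. I would then invoke Theorem 3.6 of \cite{ref1} (the $C(X)$-analogue of the statement we are trying to prove) to obtain an ideal $I$ of the ring $C(X)$ with $I \subseteq N$.

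The natural candidate for a semiring ideal of $C_{+}(X)$ sitting inside $M$ is $J := I \cap C_{+}(X)$. The inclusion $J \subseteq M$ is immediate: $J \subseteq I \subseteq N$ and $J \subseteq C_{+}(X)$, so $J \subseteq N \cap C_{+}(X) = M$. What remains is to verify that $J$ is indeed a semiring ideal of $C_{+}(X)$, which is routine. Non-emptiness follows from the observation that for any $f \in I$ the square $f^{2}$ lies in $J$ (using that $C_{+}(X)$ coincides with the squares of elements of $C(X)$, as noted in the Preliminaries). Closure under pointwise addition is clear since both $I$ and $C_{+}(X)$ are additively closed. Absorption under $C_{+}(X)$-multiplication is likewise straightforward: for $f \in J$ and $s \in C_{+}(X) \subseteq C(X)$, the product $sf$ lies in $I$ because $I$ is a ring ideal of $C(X)$, and it lies in $C_{+}(X)$ because the product of two non-negative functions is non-negative; hence $sf \in J$.

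The main (and only) potential obstacle is bibliographic rather than mathematical: the entire argument hinges on the $C(X)$-level result from \cite{ref1}. If one wanted a self-contained proof working inside $C_{+}(X)$ directly, I would instead analyse the zero-set family $Z[M] = \{Z(f) : f \in M\}$ and exploit the classification (fixed, free, or non-ideal type) that Theorem \ref{max2} transports from the three flavours of maximal cliques of $\Gamma(C(X))$ described in Remark 3.4 of \cite{ref1}. In the fixed case $M$ itself contains $M_{p} \cap C_{+}(X)$, in the free case it contains $M^{p} \cap C_{+}(X)$, and in the non-ideal case it contains the intersection of the participating maximal ideals of $C_{+}(X)$, each of which is genuinely an ideal. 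The passage through $\Gamma(C(X))$ via Theorem \ref{max2}, however, gives the cleanest uniform argument and is the route I would take.
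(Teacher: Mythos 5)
Your proposal follows exactly the paper's own argument: apply Theorem \ref{max2} to write $M = N \cap C_{+}(X)$ for a maximal clique $N$ of $\Gamma(C(X))$, extract an ideal $I \subseteq N$ via Theorem 3.6 of \cite{ref1}, and take $I \cap C_{+}(X)$ as the ideal inside $M$. The only difference is that you spell out the routine verification that $I \cap C_{+}(X)$ is a semiring ideal, which the paper leaves implicit.
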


\begin{proof}
If $M$ is a maximal clique in $\Gamma(C_{+}(X))$ then it follows from Theorem \ref{max2} that $M=N\cap C_{+}(X)$ for some maximal clique $N$ in $\Gamma(C(X))$. So $N$ contains an ideal $I$ (say) of $C(X)$ by Theorem 3.6 of \cite{ref1}. Therefore it implies that $N\cap C_{+}(X)=M$ contains the ideal $I\cap C_{+}(X)$ of $C_{+}(X)$. Hence the theorem.
\end{proof}

\begin{theorem}\label{max6}
Every maximal clique in $\Gamma(C_{+}(X))$ can be expressed as union of intersection of some maximal ideals in $C_{+}(X)$.
\end{theorem}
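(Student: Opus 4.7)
The plan is to reduce the statement to the analogous result for $\Gamma(C(X))$ (Theorem 3.11 of \cite{ref1}) by means of Theorem \ref{max2}, and then transfer the expression across to $C_{+}(X)$ using Proposition \ref{maximal ideal in C+(X)}.

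More precisely, let $N$ be a maximal clique in $\Gamma(C_{+}(X))$. First I would invoke Theorem \ref{max2} to write $N = M \cap C_{+}(X)$ for some maximal clique $M$ in $\Gamma(C(X))$. Next, I would appeal to Theorem 3.11 of \cite{ref1}, which expresses every maximal clique in $\Gamma(C(X))$ as a union of intersections of maximal ideals of $C(X)$; so we get a representation
\[
M = \bigcup_{\alpha} \bigcap_{\beta} M_{\alpha,\beta},
\]
where each $M_{\alpha,\beta}$ is a maximal ideal of $C(X)$.

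Intersecting both sides with $C_{+}(X)$ and using the fact that set intersection distributes over arbitrary unions and intersections, we obtain
\[
N = M \cap C_{+}(X) = \bigcup_{\alpha} \bigcap_{\beta} \bigl( M_{\alpha,\beta} \cap C_{+}(X) \bigr).
\]
By Proposition \ref{maximal ideal in C+(X)}, each $M_{\alpha,\beta} \cap C_{+}(X)$ is a maximal ideal of the semiring $C_{+}(X)$, and this gives the desired expression for $N$.

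There is no real obstacle here beyond the bookkeeping: Theorem \ref{max2} identifies the maximal cliques of $\Gamma(C_{+}(X))$ with traces of maximal cliques of $\Gamma(C(X))$, and Proposition \ref{maximal ideal in C+(X)} ensures the trace of a maximal ideal of $C(X)$ on $C_{+}(X)$ is again maximal. The only thing to be careful about is that the distributive step really is valid for arbitrary (possibly infinite) unions and intersections, which is a general set-theoretic fact and needs no further justification.
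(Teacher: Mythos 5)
Your proposal is correct and follows essentially the same route as the paper's own proof: reduce to a maximal clique $M$ of $\Gamma(C(X))$ via Theorem \ref{max2}, apply Theorem 3.11 of \cite{ref1} to write $M$ as a union of intersections of maximal ideals of $C(X)$, distribute the intersection with $C_{+}(X)$, and invoke Proposition \ref{maximal ideal in C+(X)} to see that each trace is a maximal ideal of the semiring. No differences worth noting.
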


\begin{proof}
Let $M$ be a maximal clique in $\Gamma(C_{+}(X))$. Then $M=N\cap C_{+}(X)$, for some maximal clique $N$ in $\Gamma(C(X))$ (see Theorem \ref{max2}). So by Theorem 3.11 of \cite{ref1}, $N$ can be expressed as union of intersection of some maximal ideals in $C(X)$, i.e., $N=\bigcup_{i}\bigcap_{\alpha_{i}} N_{\alpha_{i}}$, where each $N_{\alpha_{i}}$ is a maximal ideal in $C(X)$. Therefore $M=N\cap C_{+}(X) =\bigcup_{i}\bigcap_{\alpha_{i}} (N_{\alpha_{i}}\cap C_{+}(X))$, where each $N_{\alpha_{i}}\cap C_{+}(X)$ is a maximal ideal in $C_{+}(X)$ (see Theorem \ref{maximal ideal in C+(X)}). This completes the proof.
\end{proof}

\subsection{Prime ideals and Prime cliques:}

Every prime ideal in $C(X)$ is always contained in an unique maximal ideal which is true for $C_{+}(X)$ too (cf. Proposition 2.4 of \cite{Vechtomov}). Again prime ideals of $C_{+}(X)$ are cliques (in Theorem \ref{prime2}, we prove that the prime ideals are prime cliques (Definition \ref{def. of prime clique})) in $\Gamma(C_{+}(X))$. Also the existence of non-ideal maximal cliques in $\Gamma(C_{+}(X))$ may insist a prime ideal, as a clique, to be contained in two different maximal cliques in $\Gamma(C_{+}(X))$. Therefore, these facts intrigue us to investigate whether the prime ideals in $C_{+}(X)$, as a clique, are contained in a unique maximal clique in $\Gamma(C_{+}(X))$ or not. In the next few results we are concerned about prime cliques in $\Gamma(C_{+}(X))$ and their properties.

\begin{theorem}\label{max4}
Every prime ideal in $C_{+}(X)$ is contained in a unique maximal clique in $\Gamma(C_{+}(X))$.
\end{theorem}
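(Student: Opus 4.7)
The plan is to reduce the uniqueness claim to its analog in $\Gamma(C(X))$ (from \cite{ref1}) by lifting $P$ to a prime ideal of $C(X)$ via Proposition \ref{maximal ideal in C+(X)} and lifting any enveloping maximal clique via Theorem \ref{max2}. Existence of some maximal clique containing $P$ is automatic: $P$ is a clique by Theorem \ref{clique obs}(1), so a standard Zorn argument produces a maximal clique above it.

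For uniqueness, I would suppose $K_1$ and $K_2$ are maximal cliques of $\Gamma(C_{+}(X))$ with $P \subseteq K_1 \cap K_2$. By Theorem \ref{max2}, write $K_i = N_i \cap C_{+}(X)$ for maximal cliques $N_i$ of $\Gamma(C(X))$. By Proposition \ref{maximal ideal in C+(X)}, $P = Q \cap C_{+}(X)$ for some prime ideal $Q$ of $C(X)$. The crux of the argument is to show $Q \subseteq N_i$ for $i=1,2$. Given any $f \in Q$, the square $f^{2}$ lies in $Q \cap C_{+}(X) = P \subseteq N_i$, and since $Z(f) = Z(f^{2})$, a short case analysis on whether an arbitrary $h \in N_i$ equals $f^{2}$ shows that $f$ is adjacent to every other vertex of $N_i$. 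Because $f$ is automatically a non-unit (it sits in the proper ideal $Q$), maximality of $N_i$ in $\Gamma(C(X))$ forces $f \in N_i$. With $Q$ contained in both $N_1$ and $N_2$, the $C(X)$-version of the present theorem (proved in \cite{ref1}) yields $N_1 = N_2$, whence $K_1 = K_2 = N_1 \cap C_{+}(X)$.

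The main obstacle is justifying the inclusion $Q \subseteq N_i$: one cannot feed $f \in Q$ directly into anything living inside $C_{+}(X)$, so the squaring trick $f \mapsto f^{2}$ is needed to transport membership into $P$, and then the maximality of $N_i$ in $\Gamma(C(X))$ must be exploited to pull $f$ itself back up. Once that lifting step is in place, the rest is a bookkeeping combination of Theorem \ref{max2} with the $C(X)$ analog from \cite{ref1}.
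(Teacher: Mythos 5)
Your proposal is correct and follows essentially the same route as the paper: lift the prime ideal to a prime ideal of $C(X)$ via Proposition \ref{maximal ideal in C+(X)}, invoke the uniqueness of the enveloping maximal clique in $\Gamma(C(X))$ from \cite{ref1}, and transport the conclusion back by intersecting with $C_{+}(X)$. If anything, your write-up is the more complete of the two, since the paper simply asserts that $M\cap C_{+}(X)$ is the unique such maximal clique, whereas you supply the missing uniqueness step --- representing any competing maximal clique as $N_i\cap C_{+}(X)$ via Theorem \ref{max2} and using the squaring trick to show $Q\subseteq N_i$, so that the $C(X)$-level uniqueness forces $N_1=N_2$.
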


\begin{proof}
Let $Q$ be a prime ideal in $C_{+}(X)$. Then $Q=P\cap C_{+}(X)$, where $P$ is a prime ideal in $C(X)$ (see Proposition \ref{maximal ideal in C+(X)}). So $P$ is contained in a unique maximal clique $M$ (say), by Theorem 3.5 of \cite{ref1}. Therefore $P\cap C_{+}(X)=Q$ is contained in $M\cap C_{+}(X)$, which is a unique such maximal clique in $\Gamma(C_{+}(X))$. This completes the proof.
\end{proof}

Now we introduce the notion of the prime cliques in $\Gamma(C_{+}(X))$, adopting the same from \cite{ref1}.

\begin{definition}\label{def. of prime clique}
A clique $I$ in a graph $G$ is defined to be a {\it prime clique} if any two vertices $f,g\in G\setminus I$, which are adjacent with all elements of $I$, are adjacent to each other.
\end{definition}

The following theorem shows a connection between the prime cliques of $\Gamma(C(X))$ and those of $\Gamma(C_{+}(X))$.

\begin{theorem}\label{prime clique in C+(X)}
If $P$ is a prime clique of $\Gamma(C(X))$ then $P\cap C_{+}(X)$ is a prime clique of $\Gamma(C_{+}(X))$.
\end{theorem}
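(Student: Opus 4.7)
The plan is to derive the prime-clique property of $Q := P \cap C_{+}(X)$ in $\Gamma(C_{+}(X))$ from that of $P$ in $\Gamma(C(X))$, using the squaring map $h \mapsto h^{2}$ to move elements of $C(X)$ into $C_{+}(X)$ without disturbing zero sets. Theorem \ref{clique obs}(2) already delivers that $Q$ is a clique, so the only substantive content is primeness.

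First I would fix $f, g \in \mathcal{N}'(X) \setminus Q$ each adjacent in $\Gamma(C_{+}(X))$ to every element of $Q$, with the goal of showing $Z(f) \cap Z(g) \neq \emptyset$. Since $f, g \in C_{+}(X)$, the condition $f, g \notin Q$ is the same as $f, g \notin P$, so both sit outside $P$ as vertices of $\Gamma(C(X))$. The crucial step is to upgrade the adjacency hypothesis: I would argue that $f$ and $g$ are adjacent in $\Gamma(C(X))$ to every member of $P$, not just of $Q$. For $h \in P$ the square $h^{2}$ lies in $C_{+}(X)$ and satisfies $Z(h^{2}) = Z(h)$; regarding $h^{2}$ as an element of $Q$, the given hypothesis applied to the pair $(f, h^{2})$ yields $Z(f) \cap Z(h^{2}) = Z(f) \cap Z(h) \neq \emptyset$, and likewise for $g$. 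Once $f$ and $g$ are adjacent to every element of $P$ in $\Gamma(C(X))$, primeness of $P$ there forces $Z(f) \cap Z(g) \neq \emptyset$, which is exactly the adjacency required in $\Gamma(C_{+}(X))$.

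The main obstacle is justifying the squaring step cleanly: the hypothesis on $f$ and $g$ literally speaks only about elements of $P$ already sitting in $C_{+}(X)$, so one must ensure that $h^{2}$ genuinely acts as a witness inside $Q$. The saving grace is that $h$ and $h^{2}$ are zero-set equivalent, hence interchangeable for adjacency purposes in a zero-set intersection graph; turning this observation into a rigorous identification---perhaps by first passing to $P \cup \{h^{2} : h \in P\}$ and checking that this enlargement of $P$ remains a prime clique of $\Gamma(C(X))$ before intersecting with $C_{+}(X)$---is the technical point that requires the most care.
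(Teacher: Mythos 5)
Your instinct to reduce ``adjacent to all of $Q:=P\cap C_{+}(X)$'' to ``adjacent to all of $P$'' is exactly where the content of the theorem lies, and you correctly flag it as the delicate point; for comparison, the paper's own proof simply asserts this implication without any justification. But your squaring argument does not close the gap: for $h\in P$ you use $h^{2}$ as ``an element of $Q$,'' which requires $h^{2}\in P$, and a prime clique of $\Gamma(C(X))$ is just a set of vertices with a combinatorial property --- it has no reason to be closed under squaring. Your fallback of passing to $P\cup\{h^{2}:h\in P\}$ does not rescue the argument either, because it changes the set being intersected with $C_{+}(X)$: showing that the enlarged set meets $C_{+}(X)$ in a prime clique says nothing about $Q$ itself.

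In fact the statement is false as written. Take $X=\mathbb{R}$, $u(x)=x^{2}(x-1)^{2}$ and $h(x)=x$, and let $P=\{u,h\}$. Then $Z(u)=\{0,1\}$ and $Z(h)=\{0\}$, so $P$ is a clique; and any $f,g\notin P$ adjacent to both $u$ and $h$ must satisfy $0\in Z(f)\cap Z(g)$, so $P$ is a prime clique of $\Gamma(C(X))$. However $P\cap C_{+}(X)=\{u\}$, and the non-units $f(x)=x^{2}$, $g(x)=(x-1)^{2}$ of $C_{+}(X)$ are each adjacent to $u$ while $Z(f)\cap Z(g)=\emptyset$; so $P\cap C_{+}(X)$ is not a prime clique of $\Gamma(C_{+}(X))$. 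The theorem (and your proof, essentially verbatim) becomes correct once one adds the hypothesis that $P$ is closed under squaring --- or, more generally, that every $h\in P$ admits some $k\in P\cap C_{+}(X)$ with $Z(k)\subseteq Z(h)$ --- and this extra hypothesis does hold in both places the paper uses the result, namely for prime ideals of $C(X)$ in Theorem \ref{prime2} and for the sets $O_{M}$ (unions of intersections of ideals) in Theorem \ref{prime clique example}. So the honest repair is to restate the theorem with that hypothesis, after which your squaring step is legitimate and the rest of your argument goes through.
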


\begin{proof}
Let $f,g\in \Gamma(C_{+}(X))\setminus (P\cap C_{+}(X))$ such that $f,g$ are adjacent with all elements of $P\cap C_{+}(X)$. Then it implies that $f,g\in \Gamma(C(X))\setminus P$ and $f,g$ are adjacent with all elements of $P$. Since $P$ is a prime clique in $\Gamma(C(X))$, $f$ and $g$ are adjacent to each other. Therefore $P\cap C_{+}(X)$ is a prime clique of $\Gamma(C_{+}(X))$.
\end{proof}

The next theorem is the semiring version of Theorem 3.13 of \cite{ref1}.

\begin{theorem}\label{prime2}
Every prime ideal in $C_{+}(X)$ is a prime clique.
\end{theorem}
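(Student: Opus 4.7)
The plan is to obtain this as a corollary of the analogous result for $C(X)$ (Theorem 3.13 of \cite{ref1}) by funnelling it through the bridge results already established in the excerpt. Concretely, given a prime ideal $Q$ of $C_{+}(X)$, I want to exhibit a prime ideal $P$ of $C(X)$ whose intersection with $C_{+}(X)$ is $Q$, then transfer the prime-clique property down.

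First, I would invoke Proposition \ref{maximal ideal in C+(X)} to write $Q = P \cap C_{+}(X)$ for some prime ideal $P$ of the ring $C(X)$. Next, I would appeal to Theorem 3.13 of \cite{ref1}, which asserts that every prime ideal of $C(X)$ is a prime clique in $\Gamma(C(X))$; this gives that $P$ itself is a prime clique in $\Gamma(C(X))$. Finally, I would apply Theorem \ref{prime clique in C+(X)} to the prime clique $P$, which immediately yields that $P \cap C_{+}(X) = Q$ is a prime clique in $\Gamma(C_{+}(X))$, completing the argument.

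There is essentially no obstacle here, since all of the technical content has been absorbed into the earlier results: Proposition \ref{maximal ideal in C+(X)} handles the descent of prime ideals from $C(X)$ to $C_{+}(X)$, Theorem 3.13 of \cite{ref1} supplies the prime-clique property in the ambient graph, and Theorem \ref{prime clique in C+(X)} transports that property to the subgraph. The only thing worth double-checking is that $Q$ is indeed a vertex-set of $\Gamma(C_{+}(X))$, i.e.\ consists of non-units, which is automatic because a proper ideal cannot contain a unit. Thus the proof reduces to a short three-line citation chain.
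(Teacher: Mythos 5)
Your argument is exactly the paper's proof: both write the prime ideal $Q$ of $C_{+}(X)$ as $P\cap C_{+}(X)$ for a prime ideal $P$ of $C(X)$ via Proposition \ref{maximal ideal in C+(X)}, cite Theorem 3.13 of \cite{ref1} to get that $P$ is a prime clique in $\Gamma(C(X))$, and then apply Theorem \ref{prime clique in C+(X)} to descend to $\Gamma(C_{+}(X))$. The proposal is correct and follows the same citation chain as the paper.
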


\begin{proof}
Let $P$ be a prime ideal of $C_{+}(X)$. Then $P$ is of the form $K\cap C_{+}(X)$ for a prime ideal $K$ of $C(X)$ (cf. Theorem \ref{maximal ideal in C+(X)}). Now, by Theorem 3.13 of \cite{ref1}, $K$, being a prime ideal, is a prime clique in $\Gamma(C(X))$. Therefore in view of Theorem \ref{prime clique in C+(X)}, $P=K\cap C_{+}(X)$ is a prime clique in $\Gamma(C_{+}(X))$. This completes the proof.
\end{proof}

Recall that in \cite{ref1} authors have constructed prime cliques in terms of maximal ideals of $C(X)$. They considered for any maximal clique $M$ of $\mathcal{N}(X)$, \\$O_{M}=\bigcup_{N\in\mathcal{A}}(\bigcap_{M^{p}\in \mathcal{N'}} O^{p})$, where $\mathcal{A}$ is the collection of maximal ideals contained in $M$ and for each $N\in\mathcal{A}$, $\mathcal{N'}$ is the collection of all maximal ideals of $C(X)$ in which $N$ can be extended and $O^{p}$ is the ideal consisting of all $f\in C(X)$ such that $cl_{\beta X} Z(f)$ is a neighbourhood of $p\in\beta X$.

They proved that for each maximal clique $M$ of $\mathcal{N}(X)$, $O_{M}\subseteq M$ and hence $O_{M}$ is a clique (cf. Theorem 3.14 of \cite{ref1}). Not only that also they proved that for every maximal clique $M$ of $\mathcal{N}(X)$, $O_{M}$ is a prime clique (cf. Theorem 3.16 of \cite{ref1}).
\\Let us define $O^{+}_{M'}=O_{M}\cap C_{+}(X)=\bigcup_{N\in\mathcal{A}}(\bigcap_{M^{p}\in \mathcal{N'}} (O^{p}\cap C_{+}(X)))$, 
\\where $M'=M\cap C_{+}(X)$ is a maximal clique in $\mathcal{N'}(X)$, with the intention to obtain $O^{+}_{M'}$ as an example of prime clique for each maximal clique $M'$ in $\mathcal{N'}(X)$.

\begin{theorem}\label{prime clique example}
For every maximal clique $M'$ in $\mathcal{N'}(X)$, $O^{+}_{M'}$ is a prime clique in $\Gamma (C_{+}(X))$.
\end{theorem}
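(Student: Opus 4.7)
The plan is to reduce the claim to the analogous statement for $\Gamma(C(X))$ proved in \cite{ref1}, and then push it down to $C_{+}(X)$ via Theorem \ref{prime clique in C+(X)}. Since $O^{+}_{M'}$ is defined in terms of a maximal clique $M$ of $\Gamma(C(X))$ with $M' = M\cap C_{+}(X)$, the natural strategy is to work at the level of $C(X)$ first and then intersect with $C_{+}(X)$.

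First, I would invoke Theorem \ref{max2} to realize the given maximal clique $M'$ of $\mathcal{N'}(X)$ in the form $M' = M\cap C_{+}(X)$ for some maximal clique $M$ of $\Gamma(C(X))$, so that the auxiliary object $O^{+}_{M'} = O_M\cap C_{+}(X)$ is meaningful. Next, I would quote Theorem 3.16 of \cite{ref1}, which asserts precisely that for every maximal clique $M$ in $\mathcal{N}(X)$ the set $O_M$ (defined by the union-intersection formula recalled in the discussion preceding the statement) is a prime clique in $\Gamma(C(X))$.

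Once $O_M$ is known to be a prime clique in $\Gamma(C(X))$, the conclusion is immediate from Theorem \ref{prime clique in C+(X)}: intersecting any prime clique of $\Gamma(C(X))$ with $C_{+}(X)$ produces a prime clique of $\Gamma(C_{+}(X))$. Applying this to $P = O_M$ gives that $O_M\cap C_{+}(X) = O^{+}_{M'}$ is a prime clique of $\Gamma(C_{+}(X))$, which is exactly the statement to be proved.

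There is no real obstacle here; the only point requiring a moment's care is making sure that all three citations line up — that the $M$ produced by Theorem \ref{max2} is of the right type to feed into Theorem 3.16 of \cite{ref1} (it is, since that theorem applies to \emph{every} maximal clique of $\mathcal{N}(X)$), and that the defining formula for $O^{+}_{M'}$ agrees with $O_M\cap C_{+}(X)$ (which is built into the definition stated just before the theorem). Thus the proof is essentially a three-line concatenation of Theorem \ref{max2}, Theorem 3.16 of \cite{ref1}, and Theorem \ref{prime clique in C+(X)}.
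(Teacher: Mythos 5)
Your proposal is correct and follows essentially the same route as the paper: invoke Theorem \ref{max2} to write $M'=M\cap C_{+}(X)$, cite Theorem 3.16 of \cite{ref1} to get that $O_{M}$ is a prime clique of $\Gamma(C(X))$, and then apply Theorem \ref{prime clique in C+(X)}. The paper's proof only adds an explicit (and strictly redundant) intermediate appeal to Theorem 3.14 of \cite{ref1} and Theorem \ref{clique obs}(2) to confirm that $O_{M}\cap C_{+}(X)$ is a clique, which your argument subsumes.
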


\begin{proof}
Let $M'$ be a maximal clique in $\mathcal{N'}(X)$. Then by Theorem \ref{max2}, $M'=M\cap C_{+}(X)$, where $M$ is a maximal clique in $\mathcal{N}(X)$. Now by Theorem 3.14 of \cite{ref1}, $O_{M}$ is a clique in $\mathcal{N}(X)$. So $O_{M}\cap C_{+}(X)$ is a clique in $\mathcal{N'}(X)$ (by Theorem \ref{clique obs} (2)). Again by Theorem 3.16 of \cite{ref1}, $O_{M}$ is a prime clique in $\mathcal{N}(X)$. Therefore using Theorem \ref{prime clique in C+(X)} we conclude that $O^{+}_{M'}$ is a prime clique, for every maximal clique $M'$ in $\mathcal{N'}(X)$.
\end{proof}

At this point one may think that only prime ideals are the examples of prime cliques. But there are plenty of prime cliques which are not ideals. Following example establishes our claim.

\begin{example}
Consider the set $O^{+}_{M'}$ in $C_{+}(X)$, for a maximal clique $M'$ in $\mathcal{N'}(X)$, defined above. $O^{+}_{M'}$ is not an ideal of $C_{+}(X)$. 

But by Theorem \ref{prime clique example}, $O^{+}_{M'}$ is a prime clique in $\mathcal{N'}(X)$. 
\end{example}

\subsection{Neighbourhood Properties of zero set intersection graph:}
In this subsection we explore some properties of the neighbourhood of a vertex of the graph $\Gamma(C_{+}(X))$ and also its connection with the semiring properties of $C_{+}(X)$ and topological properties of $X$.
\\The following result establishes a relation between graph neighbourhood of a vertex of $\Gamma(C_{+}(X))$ and zero sets of the corresponding function of $C_{+}(X)$.

\begin{lemma}\label{neighbourhood property}
For any two $f,g\in\mathcal{N'}(X)$, $N[f]\subseteq N[g]$ if and only if $Z(f)\subseteq Z(g)$, where $N[f]$ represents the closed neighbourhood of $f$ in $\Gamma(C_{+}(X))$.
\end{lemma}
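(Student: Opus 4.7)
The plan is to prove the two implications separately. The reverse implication is the routine one: assuming $Z(f)\subseteq Z(g)$, take any $h\in N[f]$. If $h=f$, then since $f$ is a non-unit, $Z(f)\neq\emptyset$ and $Z(f)\cap Z(g)=Z(f)\neq\emptyset$, so either $f=g$ or $f$ is adjacent to $g$, placing $f\in N[g]$. If $h\neq f$ and $h$ is adjacent to $f$, then $Z(h)\cap Z(f)\neq\emptyset$ and $Z(h)\cap Z(g)\supseteq Z(h)\cap Z(f)\neq\emptyset$, so $h\in N[g]$. Hence $N[f]\subseteq N[g]$.

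For the forward implication I would argue by contrapositive: assuming $Z(f)\not\subseteq Z(g)$, I will build a witness $h\in N[f]\setminus N[g]$. Pick $x_{0}\in Z(f)\setminus Z(g)$; by continuity of $g$, there is an open neighbourhood $U$ of $x_{0}$ on which $g$ is strictly positive. Here the Tychonoff hypothesis on $X$ (standing throughout the paper) comes into play: complete regularity supplies a continuous function $h\colon X\to[0,1]$ with $h(x_{0})=0$ and $h\equiv 1$ on $X\setminus U$. By construction $h\in C_{+}(X)$, and $h(x_{0})=0$ makes $h$ a non-unit, so $h\in\mathcal{N}'(X)$; moreover $Z(h)\subseteq U$, so $Z(h)\cap Z(g)=\emptyset$.

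It remains to derive a contradiction with $N[f]\subseteq N[g]$. Since $x_{0}\in Z(h)\cap Z(f)$, either $h=f$ or $h$ is adjacent to $f$, and in either case $h\in N[f]$; the hypothesis then forces $h\in N[g]$. But $h(x_{0})=0\neq g(x_{0})$ yields $h\neq g$, while $Z(h)\cap Z(g)=\emptyset$ prevents $h$ from being adjacent to $g$, contradicting $h\in N[g]$. The main (modest) obstacle is the correct invocation of complete regularity to produce a non-negative witness whose zero set sits inside $U$; the remainder is a direct chase through the definitions of adjacency and of the closed neighbourhood $N[\cdot]$.
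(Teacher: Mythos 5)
Your proof is correct and follows essentially the same route as the paper: the easy direction is the same monotonicity chase, and the hard direction uses complete regularity to manufacture a non-negative non-unit $h$ with $Z(h)\cap Z(f)\neq\emptyset$ but $Z(h)\cap Z(g)=\emptyset$. The only (immaterial) differences are that the paper separates the point directly from the closed set $Z(g)$ and squares the resulting function to land in $C_{+}(X)$, whereas you separate it from the complement of $\{g>0\}$ and take a $[0,1]$-valued function; you are also somewhat more careful about the degenerate cases $h=f$ and $h=g$.
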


\begin{proof}
Let $N[f]\subseteq N[g]$. If possible, let $Z(f)\nsubseteq Z(g)$. Then there exists $p\in Z(f)$ such that $p\notin Z(g)$. Since $X$ is completely regular, there exists $h\in C(X)$ such that $h(p) = 0$ and $h(Z(g)) = 1$, meaning that $h^{2}$ is adjacent to $f$ but not to $g$, i.e., $h^{2}\in N[f]$ but $h^{2}\notin N[g]$ which contradicts the fact that $N[f]\subseteq N[g]$. Therefore $Z(f)\subseteq Z(g)$. 

Conversely, let $Z(f)\nsubseteq Z(g)$. Let $x\in N[f]$. Then $Z(x)\cap Z(f)\neq\emptyset$ implies $Z(x)\cap Z(g)\neq\emptyset$ whence it follows that $x\in N[g]$. Hence $N[f]\subseteq N[g]$. 
\end{proof}

As a consequence of Lemma \ref{neighbourhood property}, we have the following theorem which establishes that the graph isomorphism preserves the neighbourhood properties of the vertices of the graph $\Gamma(C_{+}(X))$.

\begin{theorem}\label{4.2}
If $\phi:\Gamma(C_{+}(X))\rightarrow\Gamma(C_{+}(Y))$ is a graph isomorphism then for any $f,g\in C_{+}(X)$, $Z(f)\subseteq Z(g)$ if and only if $Z(\phi(f))\subseteq Z(\phi(g))$.
\end{theorem}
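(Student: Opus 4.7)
The plan is to deduce this theorem as a direct formal consequence of Lemma \ref{neighbourhood property}, exploiting the fact that graph isomorphisms transport closed neighbourhoods. Note first that the statement should really be read with $f,g\in\mathcal{N'}(X)$, since $\phi$ is only defined on the vertex set $\mathcal{N'}(X)$ of $\Gamma(C_{+}(X))$; for a unit, the zero set is empty and the statement is vacuous.

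The first step I would carry out is to verify the transport property $\phi(N[f]) = N[\phi(f)]$ for every vertex $f$. This is an immediate unwinding of definitions: $h\in N[f]$ means $h=f$ or $(h,f)$ is an edge of $\Gamma(C_{+}(X))$, which by the isomorphism property is equivalent to $\phi(h)=\phi(f)$ or $(\phi(h),\phi(f))$ being an edge of $\Gamma(C_{+}(Y))$, i.e.\ $\phi(h)\in N[\phi(f)]$. Combined with bijectivity of $\phi$, this yields the claimed equality of sets.

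The second step is to chain equivalences. By Lemma \ref{neighbourhood property} applied in $X$,
\[
Z(f)\subseteq Z(g) \iff N[f]\subseteq N[g].
\]
Since $\phi$ is a bijection, $N[f]\subseteq N[g]$ iff $\phi(N[f])\subseteq\phi(N[g])$, and by the transport property this is iff $N[\phi(f)]\subseteq N[\phi(g)]$. Applying Lemma \ref{neighbourhood property} now in $Y$,
\[
N[\phi(f)]\subseteq N[\phi(g)] \iff Z(\phi(f))\subseteq Z(\phi(g)),
\]
which closes the chain. The reverse implication is obtained by reading the same chain backwards (or by applying the argument to $\phi^{-1}$, which is also a graph isomorphism).

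I do not expect any serious obstacle: the real analytic content — the use of complete regularity to produce, for each point separating $Z(f)$ from $Z(g)$, a witness function $h^{2}\in\mathcal{N'}(X)$ adjacent to $f$ but not to $g$ — is already packaged inside Lemma \ref{neighbourhood property}. Once that lemma is in hand, the present theorem reduces to the purely combinatorial observation that graph isomorphisms preserve closed neighbourhoods under inclusion.
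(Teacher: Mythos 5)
Your proposal is correct and follows essentially the same route as the paper: both reduce the statement to Lemma \ref{neighbourhood property} via the observation that a graph isomorphism carries $N[f]$ onto $N[\phi(f)]$, and then chain the equivalences $Z(f)\subseteq Z(g)\iff N[f]\subseteq N[g]\iff N[\phi(f)]\subseteq N[\phi(g)]\iff Z(\phi(f))\subseteq Z(\phi(g))$. Your version is slightly more explicit than the paper's (which verifies the neighbourhood inclusion element-by-element and leaves the final application of the lemma in $Y$ implicit), and your remark that $f,g$ must be taken in $\mathcal{N'}(X)$ is a reasonable clarification, but there is no substantive difference in method.
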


\begin{proof}
Let $Z(f)\subseteq Z(g)$. Then by Lemma \ref{neighbourhood property}, $N[f]\subseteq N[g]$. We are to show that $N[\phi(f)]\subseteq N[\phi(g)]$. For this, let us suppose that for $x\in \Gamma(C_{+}(X))$, $\phi(x)\in N[\phi(f)]$, i.e., $\phi(x)$ is adjacent to $\phi(f)$. Since $\phi$ is a graph isomorphism, $x\in N[f]\subseteq N[g]$ which again implies that $\phi(x)$ is adjacent to $\phi(g)$. Therefore $\phi(x)\in N[\phi(f)]$. Hence it follows that $N[\phi(f)]\subseteq N[\phi(g)]$. By reversing the argument we can similarly prove the converse.
\end{proof}

In the next theorem we characterize simplicial property of a vertex in $\Gamma(C_{+}(X))$ for first countable topological space $X$.

\begin{theorem}
If X is first countable then a vertex $f\in\mathcal{N'}(X)$ is simplicial if and only if $Z(f)$ is singleton.
\end{theorem}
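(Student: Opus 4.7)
The plan is to prove the biconditional in two parts, with first countability entering only in the reverse direction.

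For the \emph{if} direction I would assume $Z(f) = \{p\}$ and show that $N[f]$ is automatically a clique. The argument is immediate from the definition of the graph: any $g \in N[f]$ satisfies $Z(g) \cap Z(f) \neq \emptyset$, which forces $p \in Z(g)$. Hence any two distinct $g, h \in N[f]$ share the point $p$ in their zero sets, so $Z(g) \cap Z(h) \ni p$ and $g, h$ are adjacent. No hypothesis beyond Tychonoff-ness is used here.

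For the \emph{only if} direction I would argue by contrapositive. Assuming $Z(f)$ contains two distinct points $p \neq q$, the goal is to exhibit two vertices $g_{p}, g_{q} \in N[f]$ which are non-adjacent. The key input is the classical fact that in a first countable Tychonoff space every singleton is a zero set. I would prove this in line: fixing a countable decreasing neighborhood base $\{U_{n}\}$ at $p$ and using complete regularity to pick $f_{n} \in C(X,[0,1])$ with $f_{n}(p)=0$ and $f_{n}\equiv 1$ on $X \setminus U_{n}$, the uniformly convergent sum $g_{p}(x):=\sum_{n\geq 1} 2^{-n} f_{n}(x)$ lies in $C_{+}(X)$ and has zero set exactly $\{p\}$, since any $x\neq p$ escapes some $U_{n}$ and contributes $2^{-n}$ to the sum. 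The analogous construction at $q$ produces $g_{q} \in C_{+}(X)$ with $Z(g_{q})=\{q\}$.

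Both $g_{p}$ and $g_{q}$ lie in $\mathcal{N'}(X)$ since their zero sets are non-empty, and both lie in $N[f]$ since each zero set meets $Z(f)$; a cosmetic rescaling by a positive constant, which preserves the zero sets and keeps us in $C_{+}(X)$, arranges $g_{p}, g_{q} \neq f$. Finally $Z(g_{p}) \cap Z(g_{q}) = \{p\} \cap \{q\} = \emptyset$, so $g_{p}$ and $g_{q}$ are not adjacent in $\Gamma(C_{+}(X))$, showing that $N[f]$ is not a clique and hence $f$ is not simplicial. The only non-routine ingredient, and the main obstacle, is the inline verification that singletons are zero sets; this is precisely where first countability is indispensable, and one can see the statement fails without it (for instance, points of $\beta\NN \setminus \NN$ inside $\beta\NN$ are not zero sets).
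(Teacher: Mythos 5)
Your proof is correct and follows essentially the same route as the paper: the nontrivial direction is the same contrapositive argument that picks two distinct points $p,q\in Z(f)$ and produces non-adjacent vertices $g_{p},g_{q}\in N[f]$ with $Z(g_{p})=\{p\}$ and $Z(g_{q})=\{q\}$, and the reverse direction is the same immediate observation that all of $N[f]$ must contain the single point of $Z(f)$. The only difference is that you explicitly carry out the $\sum_{n}2^{-n}f_{n}$ construction verifying that singletons are zero sets in a first countable Tychonoff space, a fact the paper simply asserts as a consequence of first countability and complete regularity.
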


\begin{proof}
Suppose $f\in\mathcal{N'}(X)$ is simplicial, i.e., $N[f]$ is a clique. If not and if possible suppose $Z(f)$ is not a singleton set. Then there exists at least two distinct points $p,q\in Z(f)$ and hence due to first countability and complete regularity of $X$, there exists $g,h\in C_{+}(X)$ such that $Z(g) = \{p\}$, $g(q) = 1$ and $Z(h) = \{q\}$, $h(p) = 1$. Therefore $g,h\in N[f]$ though $g$ and $h$ are not adjacent, which contradicts our initial assumption. The converse part follows trivially.
\end{proof}

In the next theorem, using the neighbourhood property, we find that for any prime clique there exists a maximal clique containing it.

\begin{theorem}
For a given prime clique $P$ of $\Gamma(C_{+}(X))$, $M =\bigcap\{N[f]:f\in P\}$ is a maximal clique containing $P$.
\end{theorem}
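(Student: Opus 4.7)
The plan is a direct three-step verification: $P \subseteq M$, $M$ is a clique, and $M$ is maximal. First, since $P$ is itself a clique, every $g \in P$ is adjacent to every other $f \in P$, and of course $g \in N[g]$ under our convention of closed neighbourhoods; hence $g \in N[f]$ for all $f \in P$, which gives $g \in M$. This settles containment.

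The core of the argument is showing $M$ is a clique, and this is exactly the place where the hypothesis that $P$ is a \emph{prime} clique (not merely a clique) enters. I would take two distinct vertices $g, h \in M$ and split into cases. If either vertex lies in $P$, adjacency to the other is immediate from the definition of $M = \bigcap_{f \in P} N[f]$, because membership in $M$ means adjacency (or equality) to every element of $P$, and the two vertices are assumed distinct. The remaining case is $g, h \in M \setminus P$; here both $g$ and $h$ are adjacent to every element of $P$, so by the definition of a prime clique (Definition \ref{def. of prime clique}) they are adjacent to each other. This is the step I expect to be the main conceptual obstacle, since without the prime property one would only get that $M$ is an intersection of closed neighbourhoods, not that it is pairwise adjacent.

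Finally, for maximality, I would argue by contradiction. Suppose some clique $M' \supsetneq M$ exists, and pick $v \in M' \setminus M$. Since $v \notin M$, by definition of $M$ there is some $f_0 \in P$ with $v \notin N[f_0]$; that is, $v \neq f_0$ and $v$ is not adjacent to $f_0$. But $f_0 \in P \subseteq M \subseteq M'$, so $v$ and $f_0$ are distinct vertices of the clique $M'$ and must be adjacent, a contradiction. Therefore $M$ is maximal and contains $P$, completing the proof.
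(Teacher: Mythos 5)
Your proposal is correct and follows essentially the same route as the paper's proof: containment is immediate, the prime-clique hypothesis yields that $M$ is a clique, and maximality follows because any vertex outside $M$ fails to be adjacent to some $f_0\in P\subseteq M$. Your version is in fact slightly more careful than the paper's, since you explicitly handle the case where one of the two vertices of $M$ lies in $P$ (where Definition \ref{def. of prime clique} does not directly apply), but the underlying argument is identical.
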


\begin{proof}
$P\subseteq M$ is trivial. First we prove that $M$ is a clique. Let $h,k\in M$. That means $h,k$ are adjacent with all the elements of $P$. Since $P$ is a prime clique then there is an edge between $h$ and $k$. For maximality, let $M\subset N$ where $N$ is a maximal clique containing $P$. Let $g\in N\setminus M$. Then $g\notin N[f]$ for some $f\in P$. This contradicts that $N$ is a clique containing $P$. Hence the theorem follows.
\end{proof}

\section{$\Gamma(C_{+}(X))$ and Graph Isomorphisms}
In this section for two topological spaces $X$ and $Y$, we study the inter-relationships between graph isomorphisms of $\Gamma(C_{+}(X))$ and $\Gamma(C_{+}(Y))$, semiring isomorphisms of $C_{+}(X)$ and $C_{+}(Y)$ and homeomorphisms of $X$ and $Y$. Also we show that $\Gamma(C_{+}(X))$ and $\Gamma(C_{+}(Y))$ are graph isomorphic if and only if $\Gamma(C(X))$ and $\Gamma(C(Y))$ are isomorphic as graphs (Theorem \ref{small iff big graph isomorphism}). Finally, we show that graph isomorphisms of $\Gamma(C_{+}(X))$ is
equivalent to semiring isomorphisms of $C_{+}(X)$ as well as homeomorphism of $X$.

\begin{theorem}\label{5.10}
Let $X$ and $Y$ be two topological spaces such that $C_{+}(X)$ and $C_{+}(Y)$ are isomorphic as semirings. Then $\Gamma(C_{+}(X))$ and $\Gamma(C_{+}(Y))$ are graph isomorphic.
\end{theorem}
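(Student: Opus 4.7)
The plan is to show that a semiring isomorphism $\phi : C_{+}(X) \to C_{+}(Y)$ restricts to a graph isomorphism between the vertex sets $\mathcal{N'}(X)$ and $\mathcal{N'}(Y)$, with the adjacency relation encoded purely via addition and the notion of non-unit.

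First I would verify that $\phi$ preserves the unit/non-unit dichotomy. Since $\phi$ is a bijective semiring homomorphism, it must send the multiplicative identity $\mathbf{1}_X$ to the multiplicative identity $\mathbf{1}_Y$ (the image $\phi(\mathbf{1}_X)$ acts as an identity on the image, which is all of $C_{+}(Y)$). Hence $fg = \mathbf{1}_X$ if and only if $\phi(f)\phi(g) = \mathbf{1}_Y$, so by Definition \ref{definition of units}, $\phi$ maps units to units and non-units to non-units. The restriction $\phi|_{\mathcal{N'}(X)} : \mathcal{N'}(X) \to \mathcal{N'}(Y)$ is therefore a bijection onto the vertex set of $\Gamma(C_{+}(Y))$.

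Next, I would translate adjacency into an algebraic condition. For $f, g \in C_{+}(X)$, non-negativity forces $Z(f) \cap Z(g) = Z(f+g)$. Thus $f$ and $g$ are adjacent in $\Gamma(C_{+}(X))$ if and only if $f \neq g$ and $f+g$ is a non-unit of $C_{+}(X)$ (i.e.\ vanishes somewhere). Because $\phi$ is additive, $\phi(f+g) = \phi(f) + \phi(g)$, and by the first step $\phi(f+g)$ is a non-unit if and only if $f+g$ is. Combined with injectivity of $\phi$ (which ensures $\phi(f) \neq \phi(g)$), this gives the biconditional: $f$ and $g$ are adjacent in $\Gamma(C_{+}(X))$ if and only if $\phi(f)$ and $\phi(g)$ are adjacent in $\Gamma(C_{+}(Y))$. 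Hence $\phi|_{\mathcal{N'}(X)}$ is the required graph isomorphism.

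The only genuinely delicate point is the identity $Z(f)\cap Z(g) = Z(f+g)$, which crucially uses that both $f$ and $g$ are non-negative; this is exactly what makes the adjacency relation on the semiring expressible in the algebraic language that $\phi$ can be guaranteed to preserve. The rest of the argument is a direct transport of structure, so no further obstacles are expected.
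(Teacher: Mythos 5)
Your proposal is correct and follows essentially the same route as the paper: restrict $\phi$ to the non-units and use the identity $Z(f)\cap Z(g)=Z(f+g)$ together with additivity of $\phi$ to transport adjacency. You additionally spell out why $\phi$ preserves the unit/non-unit dichotomy, a point the paper's proof asserts without justification, so your write-up is if anything slightly more complete.
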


\begin{proof}
Let $\phi:C_{+}(X)\rightarrow C_{+}(Y)$ be a semiring isomorphism. Clearly the restriction of $\phi$ on the set of non-units in $C_{+}(X)$ is also a bijection from $\Gamma(C_{+}(X))$ onto $\Gamma(C_{+}(X))$ and without loss of generality we denote it by $\phi$. The only thing left to be proved is that $\phi$ preserves adjacency. Let $f$ and $g$ be adjacent in $\Gamma(C_{+}(X))$, i.e., $Z(f)\cap Z(g)\neq\emptyset$. Since $Z(f)\cap Z(g)=Z(f+g)$ and $f+g$ is a non-unit, $\phi(f+g)$ is also a non-unit, i.e., $Z(\phi(f+g))\neq\emptyset$. Then $Z(\phi(f))\cap Z(\phi(g))= Z(\phi(f)+\phi(g))=Z(\phi(f+g))\neq\emptyset$. Similarly it can be shown that if $\phi(f)$ and $\phi(g)$ are adjacent in $\Gamma(C_{+}(Y))$ then $f$ and $g$ are adjacent in $\Gamma(C_{+}(X))$. Hence $\Gamma(C_{+}(X))$ and $\Gamma(C_{+}(Y))$ are graph isomorphic.
\end{proof}

\begin{theorem}\label{big graph to small}
Let $X,Y$ be Hewitt spaces. If $\Gamma(C(X))$ and $\Gamma(C(Y))$ are graph isomorphic then $\Gamma(C_{+}(X))$ and $\Gamma(C_{+}(Y))$ are graph isomorphic.
\end{theorem}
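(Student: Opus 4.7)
The strategy is to chain three implications rather than to construct the graph isomorphism $\Gamma(C_{+}(X)) \to \Gamma(C_{+}(Y))$ directly. Starting from the hypothesis $\Gamma(C(X)) \cong \Gamma(C(Y))$, first use the main result of \cite{ref1} to deduce that the Hewitt spaces $X$ and $Y$ are homeomorphic; next, Theorem \ref{ring iso iff semiring iso}, whose hypothesis is exactly realcompactness, converts this homeomorphism into a semiring isomorphism $C_{+}(X) \cong C_{+}(Y)$; finally, Theorem \ref{5.10} transports this semiring isomorphism into the desired graph isomorphism $\Gamma(C_{+}(X)) \cong \Gamma(C_{+}(Y))$.

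The only genuinely external ingredient is the first step, which asks that the graph $\Gamma(C(X))$ recover the Hewitt space $X$ up to homeomorphism. This is precisely the content that has to be imported from \cite{ref1}. If that paper only extracts a ring isomorphism $C(X) \cong C(Y)$ from $\Gamma(C(X)) \cong \Gamma(C(Y))$, one additionally invokes the classical theorem of Hewitt that realcompact spaces are determined up to homeomorphism by the ring $C(X)$. The remaining two steps are formal applications of results already proved in the preliminaries and in the present section, and they inherit their hypotheses on $X$ and $Y$ from the first step alone.

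The main obstacle is precisely the use of the Hewitt assumption: without it, the first step fails, and without the first step there seems to be no direct way to produce the required graph isomorphism. For instance, the most naive attempt --- pushing a given graph isomorphism $\psi \colon \Gamma(C(X)) \to \Gamma(C(Y))$ down to $\Gamma(C_{+}(X))$ via $f \mapsto \psi(f)^{2}$, natural in view of the identification of $C_{+}(X)$ with the set of squares in $C(X)$ recorded in the preliminary remark --- fails at injectivity, since $\psi(f)^{2} = \psi(g)^{2}$ only forces $\psi(f) = \pm \psi(g)$ pointwise and nothing in the graph-theoretic data alone prevents $\psi$ from flipping a non-negative function onto its negative on some clopen piece of $Y$. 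The detour through the homeomorphism $X \cong Y$, made possible by the Hewitt hypothesis, bypasses this obstruction cleanly and is the reason the three-step chain is the right proof.
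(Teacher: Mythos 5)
Your three-step chain — graph isomorphism of $\Gamma(C(X))$ and $\Gamma(C(Y))$ yields $C(X)\cong C(Y)$ by the main result of \cite{ref1}, hence $X\cong Y$ by Hewitt's theorem for realcompact spaces, hence $C_{+}(X)\cong C_{+}(Y)$ by Theorem \ref{ring iso iff semiring iso}, hence the desired graph isomorphism by Theorem \ref{5.10} — is exactly the argument the paper gives, including the use of the classical ring-to-homeomorphism step. The proposal is correct and matches the paper's proof.
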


\begin{proof}
Let $X,Y$ be Hewitt spaces. Let $\Gamma(C(X))$ and $\Gamma(C(Y))$ be graph isomorphic. Then by Theorem 5.7 of \cite{ref1}, the ring $C(X))$ is isomorphic to the ring $C(Y)$. By Theorem \ref{ring iso iff semiring iso} and in view of the fact that $X,Y$ are homeomorphic spaces if and only if $C(X))$ is isomorphic to $C(Y)$ we deduce that the semiring $C_{+}(X)$ is isomorphic to the semiring $C_{+}(Y)$. Therefore by Theorem \ref{5.10}, $\Gamma(C_{+}(X))$ and $\Gamma(C_{+}(Y))$ are graph isomorphic.
\end{proof}

Let $f\in\Gamma(C(X))$. Then $f$ can be represented as $f=(f\vee 0)+(f\wedge 0)$, where $f\vee 0=max\{f,0\}$ and $f\wedge 0=min\{f,0\}$. We will denote $f\vee 0$ as $f^{+}$ and $f\wedge 0$ as $f^{-}$, i.e., $f=f^{+}+f^{-}$.
\\From the above definition, we observe the following facts.
\begin{lemma}\label{obs 1}
Let $f\in\Gamma(C(X))$ and $f=f^{+}+f^{-}$, where $f^{+}$, $f^{-}$ have their usual meaning as above. Then
\begin{itemize}
    \item [$(i)$] $f^{+},-f^{-}\in\Gamma(C_{+}(X))$.
     \item [$(ii)$] $N[f^{+}]\cup N[-f^{-}]=\Gamma(C_{+}(X))$.
      \item [$(iii)$] $Z(f^{+})\cup Z(-f^{-})=X$.
       \item [$(iv)$] $f^{+}.f^{-}=\mathbf{0}$.
\end{itemize}
 
\end{lemma}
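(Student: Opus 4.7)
The lemma decomposes into four short claims, each a direct consequence of the pointwise definitions $f^{+}(x)=\max\{f(x),0\}$ and $f^{-}(x)=\min\{f(x),0\}$. My plan is to verify (i), (iii), and (iv) by direct pointwise inspection and then deduce (ii) from (iii); no genuine obstacle is expected, since the whole lemma is driven by the fact that $f^{+}$ and $f^{-}$ have disjoint supports.

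For (i), I would observe that $f^{+}\ge 0$ and $-f^{-}=\max\{-f,0\}\ge 0$ on $X$, so both functions belong to $C_{+}(X)$. Because $f\in\Gamma(C(X))$ is a non-unit in $C(X)$, its zero set $Z(f)$ is non-empty, and for any $p\in Z(f)$ one has $f^{+}(p)=0=-f^{-}(p)$. Hence both $f^{+}$ and $-f^{-}$ vanish somewhere on $X$ and, by Definition \ref{definition of units}, are non-units of $C_{+}(X)$, i.e.\ vertices of $\Gamma(C_{+}(X))$.

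For (iii) and (iv), the plan is a single pointwise case split on the sign of $f(x)$: if $f(x)\ge 0$ then $f^{-}(x)=0$, forcing $x\in Z(-f^{-})$; if $f(x)\le 0$ then $f^{+}(x)=0$, forcing $x\in Z(f^{+})$. Either way $x\in Z(f^{+})\cup Z(-f^{-})$, proving (iii); and in either case the product $f^{+}(x)f^{-}(x)$ equals $0$, proving (iv).

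The only statement that needs (iii) as input is (ii). Given $h\in\Gamma(C_{+}(X))$, I would choose $p\in Z(h)$ (possible because $h$ is a non-unit in $C_{+}(X)$). By (iii), either $p\in Z(f^{+})$ or $p\in Z(-f^{-})$. If $h$ equals $f^{+}$ or $-f^{-}$, membership in the corresponding closed neighbourhood is trivial; otherwise $Z(h)\cap Z(f^{+})\ne\emptyset$ (or $Z(h)\cap Z(-f^{-})\ne\emptyset$), so $h$ is adjacent to the corresponding vertex in $\Gamma(C_{+}(X))$ and hence belongs to $N[f^{+}]\cup N[-f^{-}]$. The reverse inclusion $N[f^{+}]\cup N[-f^{-}]\subseteq\Gamma(C_{+}(X))$ holds by definition of the neighbourhoods, completing the proof.
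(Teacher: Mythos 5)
Your proposal is correct; the paper states this lemma without any proof, presenting it as an immediate observation from the definitions of $f^{+}$ and $f^{-}$, and your pointwise case split on the sign of $f(x)$ together with the deduction of (ii) from (iii) via the non-emptiness of $Z(h)$ for a non-unit $h$ is exactly the routine verification the authors leave implicit. No gaps.
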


\begin{lemma}\label{prop obs}
Let $f\in\Gamma(C(X))$ $(f\neq 0)$ such that $f=g+h$ satisfying (i) $g,-h\in C_{+}(X)$, (ii) $g\cdot h=\mathbf{0}$. Then $g=f^{+}$ and $h=f^{-}$.
\end{lemma}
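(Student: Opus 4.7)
The plan is to argue pointwise, exploiting the product condition $g\cdot h = \mathbf{0}$ to force a clean case split at every point of $X$. Since everything happens at the level of function values, no topology or continuity argument is needed beyond the fact that the claimed identities $g=f^{+}$ and $h=f^{-}$ are pointwise statements.

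First, I would fix an arbitrary $x\in X$. Condition (ii) gives $g(x)h(x)=0$, so either $g(x)=0$ or $h(x)=0$. In the first case, $f(x)=g(x)+h(x)=h(x)\le 0$ since $-h\in C_{+}(X)$ forces $h(x)\le 0$; therefore $f^{+}(x)=\max\{f(x),0\}=0=g(x)$ and $f^{-}(x)=\min\{f(x),0\}=f(x)=h(x)$. In the second case, $f(x)=g(x)\ge 0$ since $g\in C_{+}(X)$; hence $f^{+}(x)=f(x)=g(x)$ and $f^{-}(x)=0=h(x)$.

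In both cases we obtain $g(x)=f^{+}(x)$ and $h(x)=f^{-}(x)$. Since $x$ was arbitrary, $g=f^{+}$ and $h=f^{-}$ as elements of $C(X)$.

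There is really no main obstacle here; the only mild subtlety is remembering that $-h\in C_{+}(X)$ means $h(x)\le 0$ at every point (not $h(x)\ge 0$), which is what allows the case $g(x)=0$ to identify $h(x)$ with $f^{-}(x)$ rather than $f^{+}(x)$. The hypothesis $f\neq 0$ is not actually used in the argument; it is presumably included only to match the convention of the graph $\Gamma(C(X))$ where the zero function is excluded from the vertex set.
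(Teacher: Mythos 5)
Your proof is correct. The pointwise case split on $g(x)=0$ versus $h(x)=0$, combined with the sign constraints $g\ge 0$ and $h\le 0$, immediately identifies $g(x)$ with $f^{+}(x)$ and $h(x)$ with $f^{-}(x)$ in both cases (and the two cases agree when both values vanish), so the argument is complete. The paper reaches the same conclusion by a more roundabout route: it argues by contradiction, introducing the quantity $c=f^{+}(x)-g(x)$ at a hypothetical point of disagreement, expanding $(f^{+}(x)-c)(f^{-}(x)+c)=0$ using $f^{+}(x)f^{-}(x)=0$ to get $c(f^{+}(x)-f^{-}(x))=c^{2}$, and then ruling out both resulting alternatives. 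The underlying engine is the same in both proofs --- the pointwise consequence of $g\cdot h=\mathbf{0}$ --- but your direct two-case analysis avoids the auxiliary constant and the contradiction scaffolding entirely, and is the cleaner of the two. Your closing observation is also accurate: the hypothesis $f\neq 0$ plays no role in either argument and is only there because $\mathbf{0}$ is not a vertex of $\Gamma(C(X))$.
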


\begin{proof}
If not and if possible let $g\neq f^{+}$ or $h\neq f^{-}$. Suppose that $g\neq f^{+}$. Then there exists $x\in X$ such that $g(x)\neq f^{+}(x)$.
So $g(x)+c=f^{+}(x)$, for some real number $c$. Then $f(x)=f^{+}(x)+f^{-}(x)=g(x)+c+f^{-}(x)$. Therefore $f(x)=g(x)+h(x)$ implies that $h(x)=f^{-}(x)+c$. Now, by our assumption that $g(x)h(x)=0$, we have, $(f^{+}(x)-c)(f^{-}(x)+c)=0$ which implies that $c(f^{+}(x)-f^{-}(x))=c^{2}$ (since $f^{+}(x)f^{-}(x)=0$). Hence it follows that either $c=0$ or $f^{+}(x)-f^{-}(x)=c$. If $c=0$ then $g(x)= f^{+}(x)$ which contradicts our assumption that $g\neq f^{+}$. Again if $f^{+}(x)-f^{-}(x)=c$ then $f^{+}(x)=g(x)+c=g(x)+f^{+}(x)-f^{-}(x)$ which implies that $g(x)=f^{-}(x)$. It implies that $g(x)\leq 0$ whence it follows that $g(x)=0$, since $g\in C_{+}(X)$. So $f(x)=h(x)\leq 0$ which implies $f^{+}(x)=0=g(x)$. This is a again a contradiction to our assumption that $g(x)+c=f^{+}(x)$. Hence $g= f^{+}$. Similarly if we assume that $h\neq f^{-}$, we will get a contradiction. Therefore $g= f^{+}$ and $h= f^{-}$.
\end{proof}

Now, we will establish the converse of Theorem \ref{big graph to small}, i.e., if $\Gamma(C_{+}(X))$ and $\Gamma(C_{+}(Y))$ are graph isomorphic then $\Gamma(C(X))$ and $\Gamma(C(Y))$ are graph isomorphic. For each graph isomorphism $\phi$ between $\Gamma(C_{+}(X))$ and $\Gamma(C_{+}(Y))$ we define $\bar{\phi}:\Gamma(C(X))\rightarrow\Gamma(C(Y))$ as follows:
$$\bar{\phi}(f):=\phi(f^{+})-\phi(-f^{-}),$$ where $f=f^{+}+f^{-}\in\Gamma(C(X))$.
Then in view of Lemma \ref{obs 1} and Lemma \ref{prop obs}, we prove the following.
\begin{lemma}\label{obs 2}
For any $f\in\Gamma(C(X))$,
\begin{itemize}
    \item [$(i)$] $N[\phi(f^{+})]\cup N[\phi(-f^{-})]=\Gamma(C_{+}(Y))$.
    \item [$(ii)$] $\phi(f^{+}).\phi(f^{-})=\mathbf{0}$.
    \item [$(iii)$] $(\bar{\phi}(f))^{+}=\phi(f^{+})$ and $(\bar{\phi}(f))^{-}=-\phi(-f^{-})$.
    \item [$(iv)$] $Z(\phi(f^{+}))\cup Z(\phi(-f^{-}))=Y$.
\end{itemize}
\end{lemma}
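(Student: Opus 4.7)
The plan is to push the four facts of Lemma \ref{obs 1} across the graph isomorphism $\phi$. Items (ii) and (iv) are logically equivalent, since $g\cdot h=\mathbf{0}$ for $g,h\in C_+(Y)$ means exactly $Z(g)\cup Z(h)=Y$; item (i) is a direct consequence of adjacency-preservation; and item (iii) will fall out of (ii) together with the uniqueness in Lemma \ref{prop obs} applied to the decomposition $\bar{\phi}(f)=\phi(f^+)+(-\phi(-f^-))$.

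For (i), I take any $k\in\Gamma(C_+(Y))$, write $k=\phi(g)$ using bijectivity of $\phi$, apply Lemma \ref{obs 1}(ii) to $g$ to obtain $g\in N[f^+]\cup N[-f^-]$, and then use that $\phi$ preserves closed neighbourhoods to land $k$ in $N[\phi(f^+)]\cup N[\phi(-f^-)]$; the reverse inclusion is trivial.

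The main obstacle is (iv), equivalently (ii), because $\phi$ is only a graph isomorphism and carries no a priori information about pointwise products. I would bridge this with the graph-theoretic characterization
\[
Z(g)\cup Z(h)=X \iff N[g]\cup N[h]=\mathcal{N}'(X), \qquad g,h\in\mathcal{N}'(X),
\]
valid in any Tychonoff space $X$. The forward direction is easy: every non-unit $k$ has $\emptyset\neq Z(k)\subseteq Z(g)\cup Z(h)$, so $Z(k)$ meets one of them, placing $k$ in $N[g]\cup N[h]$. For the converse, a point $p\in X\setminus (Z(g)\cup Z(h))$ would, by complete regularity, admit $h_0\in C(X)$ with $h_0(p)=0$ and $h_0\equiv 1$ on $Z(g)\cup Z(h)$; then $h_0^2\in\mathcal{N}'(X)$ would be adjacent to neither $g$ nor $h$, a contradiction. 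Applying the reverse direction of this characterization in $Y$ to the $N$-cover already furnished by (i) immediately yields (iv), and therefore (ii).

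For (iii), put $G=\phi(f^+)$ and $H=-\phi(-f^-)$, so $\bar{\phi}(f)=G+H$ with $G,-H\in C_+(Y)$; the orthogonality $G\cdot H=\mathbf{0}$ is precisely (ii). Provided $\bar{\phi}(f)\neq\mathbf{0}$ -- which holds because $\phi(f^+)=\phi(-f^-)$ would force $f^+=-f^-$ by injectivity of $\phi$, and then $f^+\cdot(-f^-)=\mathbf{0}$ would force $f^+=0=-f^-$, i.e.\ $f=0$ -- Lemma \ref{prop obs} identifies $G=(\bar{\phi}(f))^+$ and $H=(\bar{\phi}(f))^-$, which is exactly (iii). (Well-definedness of $\bar{\phi}(f)$ as an element of $\Gamma(C(Y))$ is automatic from the same line of reasoning: $f^+,-f^-$ are adjacent in $\Gamma(C_+(X))$ by Lemma \ref{obs 1}(iii), so $Z(\phi(f^+))\cap Z(\phi(-f^-))\neq\emptyset$ and $\bar{\phi}(f)$ has a zero in $Y$.)
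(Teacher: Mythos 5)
Your proof is correct and takes essentially the same route as the paper's: (i) by transporting the cover from Lemma \ref{obs 1}(ii) through the isomorphism, (ii) and (iv) via the complete-regularity argument that a closed-neighbourhood cover of $\mathcal{N}'(Y)$ forces the corresponding zero-set cover of $Y$, and (iii) by feeding the decomposition $\bar{\phi}(f)=\phi(f^{+})+(-\phi(-f^{-}))$ into Lemma \ref{prop obs}. You merely derive (iv) before (ii) where the paper does the reverse, and you make explicit two points the paper leaves implicit --- the equivalence $N[g]\cup N[h]=\mathcal{N}'(Y)\iff Z(g)\cup Z(h)=Y$ and the verification that $\bar{\phi}(f)\neq\mathbf{0}$ so that Lemma \ref{prop obs} actually applies --- which is a welcome tightening rather than a departure.
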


\begin{proof}
(i) Clearly $N[\phi(f^{+})]\cup N[\phi(-f^{-})]\subseteq\Gamma(C_{+}(Y))$. Now, let $y\in \Gamma(C_{+}(Y))$. As $\phi$ is a graph isomorphism between $\Gamma(C_{+}(X))$ and $\Gamma(C_{+}(Y))$, there exists $x\in\Gamma(C_{+}(x))$ such that $\phi(x)=y$. Now, by Lemma \ref{obs 1} (ii), $N[f^{+}]\cup N[-f^{-}]=\Gamma(C_{+}(X))$ implies $x\in N[f^{+}]\cup N[-f^{-}]$. Then $x\in N[f^{+}]$ or $x\in N[-f^{-}]$ which further implies that $\phi(x)\in N[\phi(f^{+})]$ or $\phi(x)\in N[\phi(-f^{-})]$. Hence it follows that $y\in N[\phi(f^{+})]\cup N[\phi(-f^{-})]$. Therefore $N[\phi(f^{+})]\cup N[\phi(-f^{-})]=\Gamma(C_{+}(Y))$.
\\(ii) $N[\phi(f^{+})]\cup N[\phi(-f^{-})]=\Gamma(C_{+}(Y))$ implies that $N[\phi(f^{+}).\phi(f^{-})]=\Gamma(C_{+}(Y))$. Therefore $\phi(f^{+}).\phi(f^{-})=\mathbf{0}$.
\\(iii) It follows from (ii) together with the definition of the mapping $\bar{\phi}$ and Lemma \ref{prop obs}.
\\(iv) Since $Z(\phi(f^{+}))\cup Z(\phi(f^{-}))=Z(\phi(f^{+}).\phi(f^{-}))$ then by (ii) we get $Z(\phi(f^{+}))\cup Z(\phi(-f^{-}))=Y$. 
\end{proof}

In the following results we will show that the mapping $\bar{\phi}$ defined above is a graph isomorphism between $\Gamma(C(X))$ and $\Gamma(C(Y))$.

\begin{proposition}\label{bijection}
Let $\phi:\Gamma(C_{+}(X))\rightarrow\Gamma(C_{+}(Y))$ be a graph isomorphism. Then the mapping $\bar{\phi}$ is well-defined and bijective.
\end{proposition}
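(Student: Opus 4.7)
The plan is to unpack the definition $\bar{\phi}(f):=\phi(f^{+})-\phi(-f^{-})$, verify that $\bar{\phi}$ actually lands in $\Gamma(C(Y))$, and then prove injectivity and surjectivity separately. Lemmas \ref{obs 1}, \ref{prop obs}, and \ref{obs 2} supply nearly every ingredient needed, so the task is largely one of assembly.

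For well-definedness, fix $f\in\Gamma(C(X))$. Lemma \ref{obs 1}(i) places $f^{+}$ and $-f^{-}$ in $\mathcal{N'}(X)$, so $\phi(f^{+})$ and $\phi(-f^{-})$ are defined. Moreover $f^{+}$ and $-f^{-}$ are adjacent in $\Gamma(C_{+}(X))$ (or coincide in the degenerate case $f=\mathbf{0}$) because $Z(f^{+})\cap Z(-f^{-})=Z(f)\neq\emptyset$; since $\phi$ preserves adjacency, $Z(\phi(f^{+}))\cap Z(\phi(-f^{-}))\neq\emptyset$ as well. Lemma \ref{obs 2}(iii) identifies $\phi(f^{+})$ and $-\phi(-f^{-})$ as the positive and negative parts of $\bar{\phi}(f)$, so $Z(\bar{\phi}(f))=Z(\phi(f^{+}))\cap Z(\phi(-f^{-}))\neq\emptyset$, placing $\bar{\phi}(f)$ in $\Gamma(C(Y))$.

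Injectivity follows from the uniqueness supplied by Lemma \ref{prop obs}: if $\bar{\phi}(f)=\bar{\phi}(g)$, then Lemma \ref{obs 2}(iii) gives $\phi(f^{+})=\phi(g^{+})$ and $\phi(-f^{-})=\phi(-g^{-})$, and the bijectivity of $\phi$ forces $f^{+}=g^{+}$ and $f^{-}=g^{-}$, hence $f=g$. For surjectivity, given $h\in\Gamma(C(Y))$ set $g_{1}:=\phi^{-1}(h^{+})$ and $g_{2}:=\phi^{-1}(-h^{-})$; applying Lemma \ref{obs 2}(ii) to the graph isomorphism $\phi^{-1}$ and the element $h$ yields $g_{1}\cdot g_{2}=\mathbf{0}$. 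Lemma \ref{prop obs} applied to $f:=g_{1}-g_{2}$ then gives $f^{+}=g_{1}$ and $-f^{-}=g_{2}$, so $\bar{\phi}(f)=\phi(g_{1})-\phi(g_{2})=h^{+}-(-h^{-})=h$; the non-unit property of $f$ is inherited from that of $h$ by repeating the adjacency argument through $\phi^{-1}$.

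The main obstacle is the identity $g_{1}\cdot g_{2}=\mathbf{0}$ that drives surjectivity: because $\phi$ is only a graph isomorphism and not a semiring homomorphism, multiplicative relations have to be extracted from purely combinatorial data. Lemma \ref{obs 2}(ii) is exactly the mechanism that translates the graph-theoretic cover $N[\phi(f^{+})]\cup N[\phi(-f^{-})]=\Gamma(C_{+}(Y))$ into the algebraic vanishing of a product, and it is the one place where complete regularity of the underlying spaces is implicitly used. Once this translation is in place, the rest of the proposition reduces to bookkeeping with the canonical decomposition $f=f^{+}+f^{-}$.
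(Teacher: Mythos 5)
Your proof is correct and follows the same overall strategy as the paper: decompose $f$ as $f^{+}+f^{-}$, use Lemma \ref{obs 2}(iii) to identify $\phi(f^{+})$ and $-\phi(-f^{-})$ as the positive and negative parts of $\bar{\phi}(f)$, and reduce injectivity to the injectivity of $\phi$. Where you genuinely diverge from (and improve on) the paper is the surjectivity step. The paper picks preimages $h_{1},h_{2}$ of $h^{+}$ and $-h^{-}$, asserts the existence of some $g$ with $g^{+}=h_{1}$, and then claims that $(\bar{\phi}(g))^{-}=h^{-}$ ``follows''; but knowing the positive part of $\bar{\phi}(g)$ does not by itself determine its negative part, so as written there is a lacuna. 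You instead construct the preimage explicitly as $f=g_{1}-g_{2}$ with $g_{1}=\phi^{-1}(h^{+})$ and $g_{2}=\phi^{-1}(-h^{-})$, extract the key identity $g_{1}\cdot g_{2}=\mathbf{0}$ by applying Lemma \ref{obs 2}(ii) to the isomorphism $\phi^{-1}$, and then invoke the uniqueness statement of Lemma \ref{prop obs} to conclude $f^{+}=g_{1}$ and $-f^{-}=g_{2}$; this is exactly the verification the paper omits. Your additional checks --- that $\bar{\phi}(f)$ really lands in $\Gamma(C(Y))$ as a non-unit, and the treatment of the degenerate case --- further tighten the argument. The one point to make explicit is that Lemma \ref{prop obs} is stated only for $f\neq 0$: if $g_{1}=g_{2}$ then $h^{+}=-h^{-}$ forces $h=\mathbf{0}$, and this case is handled separately because a graph isomorphism must fix $\mathbf{0}$, the unique vertex adjacent to every other vertex.
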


\begin{proof}
Let $f,g\in\Gamma(C(X))$ such that $f=g$. Also $f=f^{+}+f^{-}$ and $g=g^{+}+g^{-}$. Then $f^{+}=g^{+}$ and $f^{-}=g^{-}$. Since $\phi$ is a well defined mapping between $\Gamma(C_{+}(X))$ and $\Gamma(C_{+}(Y))$ and $f^{+},-f^{-}\in\Gamma(C_{+}(X))$, $\phi(f^{+})=\phi(g^{+})$ and $\phi(-f^{-})=\phi(-g^{-})$. Again $(\bar{\phi}(f))^{+}=\phi(f^{+})$ and $(\bar{\phi}(f))^{-}=-\phi(-f^{-})$ for any $f\in\Gamma(C(X))$. Therefore $\bar{\phi}(f)=(\bar{\phi}(f))^{+}+(\bar{\phi}(f))^{-}=\phi(f^{+})-\phi(-f^{-})=\phi(g^{+})-\phi(-g^{-})=\bar{\phi}(g)$. Hence $\bar{\phi}$ is well defined. Reversing the above implications we can prove that $\bar{\phi}(f)=\bar{\phi}(g)$ implies $f=g$. So $\bar{\phi}$ is one-one.\\
Now, let $h\in\Gamma(C(Y))$. Then $h=h^{+}+h^{-}$, where $h^{+},-h^{-}\in\Gamma(C_{+}(Y))$. Since $\phi$ is onto, there exist $h_{1},h_{2}\in\Gamma(C_{+}(X))$ such that $\phi(h_{1})=h^{+}$ and $\phi(h_{2})=h^{-}$. Also there exists $g\in\Gamma(C(X))$ such that $g^{+}=h_{1}$. So $\phi(g^{+})=\phi(h_{1})=h^{+}$. Therefore $(\bar{\phi}(g))^{+}=\phi(g^{+})=h^{+}$. So it follows that $(\bar{\phi}(g))^{-}=-\phi(-g^{-})=h^{-}$ (i.e., $h_{2}=-g^{-}$). Hence $\bar{\phi}(g)=(\bar{\phi}(f))^{+}+(\bar{\phi}(f))^{-}=h^{+}+h^{-}=h$. Therefore $\bar{\phi}$ is onto. Hence $\bar{\phi}$ is well-defined and bijective.
\end{proof}

\begin{proposition}\label{graph homo}
Let $\phi:\Gamma(C_{+}(X))\rightarrow\Gamma(C_{+}(Y))$ be a graph isomorphism. Then the mapping $\bar{\phi}$ is a graph homomorphism.
\end{proposition}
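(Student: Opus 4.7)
My plan is to verify that $\bar\phi$ sends edges of $\Gamma(C(X))$ to edges of $\Gamma(C(Y))$. So take adjacent $f,g\in\Gamma(C(X))$, i.e., $f\neq g$ and $Z(f)\cap Z(g)\neq\emptyset$; since Proposition \ref{bijection} already gives $\bar\phi$ injective, $\bar\phi(f)\neq\bar\phi(g)$ is free, and it remains to exhibit a point in $Z(\bar\phi(f))\cap Z(\bar\phi(g))$. The first thing I would do is reduce this to a question purely about $C_+(Y)$: using Lemma \ref{obs 2}(iii) together with the fact that a real-valued continuous function vanishes exactly where both its positive and negative parts vanish, I get
\[
Z(\bar\phi(f))=Z(\phi(f^+))\cap Z(\phi(-f^-)),\qquad Z(\bar\phi(g))=Z(\phi(g^+))\cap Z(\phi(-g^-)).
\]
So the target becomes: the four zero-sets $Z(\phi(f^+))$, $Z(\phi(-f^-))$, $Z(\phi(g^+))$, $Z(\phi(-g^-))$ in $Y$ have a common point.

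The hard part is that $\phi$, being only a graph isomorphism, directly ``sees'' just pairwise intersections of zero-sets, whereas here I need a four-fold common zero. My trick will be to use a single element of $C_+(X)$ that absorbs all four constraints at once. Set
\[
h := f^+ + (-f^-) + g^+ + (-g^-)\in C_+(X).
\]
Because a finite sum of non-negative continuous functions vanishes at a point precisely when each summand does, $Z(h)=Z(f^+)\cap Z(-f^-)\cap Z(g^+)\cap Z(-g^-)=Z(f)\cap Z(g)$, which is nonempty by hypothesis. Hence $h\in\mathcal{N}'(X)=\Gamma(C_+(X))$, and $Z(h)$ sits inside each of the four zero-sets $Z(f^+)$, $Z(-f^-)$, $Z(g^+)$, $Z(-g^-)$.

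From here I would finish by applying Theorem \ref{4.2} four times to the graph isomorphism $\phi$: each containment $Z(h)\subseteq Z(\cdot)$ in $X$ is transported to the corresponding containment $Z(\phi(h))\subseteq Z(\phi(\cdot))$ in $Y$, yielding
\[
Z(\phi(h))\subseteq Z(\phi(f^+))\cap Z(\phi(-f^-))\cap Z(\phi(g^+))\cap Z(\phi(-g^-))=Z(\bar\phi(f))\cap Z(\bar\phi(g)).
\]
Finally, $\phi(h)$ is a non-unit in $C_+(Y)$ (being the image of a non-unit under the graph isomorphism), so $Z(\phi(h))\neq\emptyset$, and the right-hand side is nonempty, as required. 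Combined with $\bar\phi(f)\neq\bar\phi(g)$ this exhibits the edge between $\bar\phi(f)$ and $\bar\phi(g)$ in $\Gamma(C(Y))$, so $\bar\phi$ is a graph homomorphism.
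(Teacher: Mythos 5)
Your proof is correct and follows essentially the same route as the paper's: both arguments reduce the required four-fold intersection to zero-set containments of a single auxiliary non-unit in $C_{+}(X)$ whose zero set equals $Z(f)\cap Z(g)$ (you use $|f|+|g| = f^{+}+(-f^{-})+g^{+}+(-g^{-})$ where the paper uses $f^{2}+g^{2}$) and then transport those containments through Theorem \ref{4.2}. The only substantive difference is that the paper's proof of this proposition also establishes the converse implication (adjacency of $\bar{\phi}(f)$ and $\bar{\phi}(g)$ forces adjacency of $f$ and $g$), which is what allows Theorem \ref{small graph to big} to conclude that $\bar{\phi}$ is an isomorphism rather than a mere homomorphism; your one-directional argument proves the proposition as literally stated, but that reflection step would then have to be supplied separately.
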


\begin{proof}
let us assume that $f,g\in\Gamma(C(X))$ such that $f$ is adjacent to $g$. Then $Z(f)\cap Z(g)\neq\emptyset$ which implies $(Z(f^{+})\cap Z(f^{-}))\cap (Z(g^{+})\cap Z(g^{-}))\neq\emptyset$. Also $Z(
f)\cap Z(g)=Z(f^{2}+g^{2})\neq\emptyset$. So $f^{+},-f^{-},g^{+},-g^{-},f^{2}+g^{2}$ are all adjacent and $N[f^{2}+g^{2}]$ is contained in $N[f^{+}],N[-f^{-}],N[g^{+}],N[-g^{-}]$. Since $\phi$ is a graph isomorphism, by the neighbourhood property (see Theorem \ref{4.2}) it follows that $N[\phi(f^{2}+g^{2})]$ is contained in $N[\phi(f^{+})],N[\phi(-f^{-})],N[\phi(g^{+})],N[\phi(-g^{-})]$. Then $Z(\phi(f^{2}+g^{2}))\subseteq Z(\phi(f^{+}))\cap Z(-\phi(-f^{-}))\cap Z(\phi(g^{+}))\cap Z(-\phi(-g^{-}))\neq\emptyset$. It implies that $Z(\bar{\phi}(f))\cap Z(\bar{\phi}(g))\neq\emptyset$ (using Lemma \ref{obs 2} (1)). Therefore $\bar{\phi}(f)$ and $\bar{\phi}(g)$ are adjacent.\\
Now, let us assume that for $f,g\in\Gamma(C(X))$, $\bar{\phi}(f)$ and $\bar{\phi}(g)$ be adjacent, i.e., $Z(\bar{\phi}(f))\cap Z(\bar{\phi}(g))\neq\emptyset$. Then $Z((\bar{\phi}(f))^{+})\cap Z((\bar{\phi}(f))^{-})\cap Z((\bar{\phi}(g))^{+})\cap Z((\bar{\phi}(g))^{-})\neq\emptyset$ which implies $Z(\phi(f^{+}))\cap Z(-\phi(-f^{-}))\cap Z(\phi(g^{+}))\cap Z(-\phi(-g^{-}))\neq\emptyset$. Also $Z(\bar{\phi}(f))\cap Z(\bar{\phi}(g))=Z((\bar{\phi}(f))^{2}+(\bar{\phi}(g))^{2})\neq\emptyset$. So $\phi(f^{+}), \phi(-f^{-}), \phi(g^{+}), \phi(-g^{-}),$
\\$(\bar{\phi}(f))^{2}+(\bar{\phi}(g))^{2}$ are all adjacent and
$N[(\bar{\phi}(f))^{2}+(\bar{\phi}(g))^{2}]$ is contained in
\\$N[\phi(f^{+})],N[\phi(-f^{-})],N[\phi(g^{+})],N[\phi(-g^{-})]$. Since $\phi$ is a graph isomorphism, by the neighbourhood property it follows that $N[\phi^{-1}((\bar{\phi}(f))^{2}+(\bar{\phi}(g))^{2})]$ is contained in $N[f^{+}],N[-f^{-}],N[g^{+}],N[-g^{-}]$. Then $Z(f^{+})\cap Z(f^{-})\cap Z(g^{+})\cap Z(g^{-})\neq\emptyset$. This implies $Z(f)\cap Z(g)\neq\emptyset$. So $f,g$ are adjacent to each other. Hence $\bar{\phi}$ is a graph homomorphism.
\end{proof}

Combining Propositions \ref{bijection}, \ref{graph homo}, we have the following theorem.

\begin{theorem}\label{small graph to big}
If $\Gamma(C_{+}(X))$ and $\Gamma(C_{+}(Y))$ are graph isomorphic then $\Gamma(C(X))$ and $\Gamma(C(Y))$ are graph isomorphic.
\end{theorem}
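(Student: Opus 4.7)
The plan is straightforward assembly: given a graph isomorphism $\phi:\Gamma(C_{+}(X))\rightarrow\Gamma(C_{+}(Y))$, I would use the map $\bar{\phi}:\Gamma(C(X))\rightarrow\Gamma(C(Y))$ already defined in the excerpt by $\bar{\phi}(f)=\phi(f^{+})-\phi(-f^{-})$, and verify it satisfies the definition of a graph isomorphism by quoting the two propositions immediately preceding the theorem.

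More concretely, first I would observe that the hypothesis gives us some graph isomorphism $\phi$, so $\bar{\phi}$ is defined. Proposition \ref{bijection} tells us that $\bar{\phi}$ is a well-defined bijection between the vertex sets $\Gamma(C(X))$ and $\Gamma(C(Y))$. Proposition \ref{graph homo} supplies the adjacency-preservation statement in both directions (the proof of that proposition handles the forward and backward implications separately, using Lemma \ref{obs 2} and the neighbourhood-preservation result Theorem \ref{4.2}). Together these two propositions give exactly the definition of a graph isomorphism, so no further work is needed.

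The only thing to be careful about is simply stating the conclusion in a clean way that records the direction of implication claimed by the theorem; since this is purely a combination, there is no real obstacle, just bookkeeping. A one-paragraph proof of the form \emph{``Let $\phi$ be the given graph isomorphism and consider $\bar{\phi}$. By Proposition \ref{bijection}, $\bar{\phi}$ is bijective, and by Proposition \ref{graph homo}, $\bar{\phi}$ preserves adjacency in both directions. Hence $\bar{\phi}$ is a graph isomorphism between $\Gamma(C(X))$ and $\Gamma(C(Y))$.''} would suffice.
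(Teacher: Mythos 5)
Your proposal is correct and is exactly the paper's argument: the paper states Theorem \ref{small graph to big} as an immediate consequence of combining Propositions \ref{bijection} and \ref{graph homo} applied to the map $\bar{\phi}(f)=\phi(f^{+})-\phi(-f^{-})$. Nothing further is needed.
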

Therefore in view of Theorems \ref{big graph to small} and \ref{small graph to big} we have the following.

\begin{theorem}\label{small iff big graph isomorphism}
If $\Gamma(C_{+}(X))$ and $\Gamma(C_{+}(Y))$ are graph isomorphic if and only if $\Gamma(C(X))$ and $\Gamma(C(Y))$ are graph isomorphic.
\end{theorem}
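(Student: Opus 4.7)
The plan is essentially to chain together the two directions that have already been established, since this biconditional is an immediate corollary. For the forward implication (graph isomorphism of the $C(\cdot)$ graphs implies graph isomorphism of the $C_{+}(\cdot)$ graphs), I would simply invoke Theorem \ref{big graph to small}, whose proof runs a hypothetical isomorphism $\Gamma(C(X))\cong\Gamma(C(Y))$ through the sequence: graph iso $\Rightarrow$ ring iso of $C(X)$ and $C(Y)$ (by the cited result from \cite{ref1}) $\Rightarrow$ homeomorphism $X\cong Y$ (Hewitt/realcompactness) $\Rightarrow$ semiring iso of $C_{+}(X)$ and $C_{+}(Y)$ (Theorem \ref{ring iso iff semiring iso}) $\Rightarrow$ graph iso of the $C_{+}$ graphs (Theorem \ref{5.10}).

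For the reverse implication, I would invoke Theorem \ref{small graph to big}, which is the substantive half of the equivalence. Its proof is direct and constructive: given a graph isomorphism $\phi\colon\Gamma(C_{+}(X))\to\Gamma(C_{+}(Y))$, one lifts it to $\bar\phi\colon\Gamma(C(X))\to\Gamma(C(Y))$ by the formula $\bar\phi(f):=\phi(f^{+})-\phi(-f^{-})$ using the canonical positive/negative part decomposition $f=f^{+}+f^{-}$, and then one checks bijectivity (Proposition \ref{bijection}) and adjacency preservation (Proposition \ref{graph homo}) through the neighbourhood transfer property of Theorem \ref{4.2} together with the structural lemmas \ref{obs 1}, \ref{prop obs}, \ref{obs 2}.

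Since all technical content lives in those prior theorems, there is no genuine obstacle left at this stage, and the write-up is a one-sentence combination. The only subtlety I would flag is that Theorem \ref{big graph to small} explicitly requires $X$ and $Y$ to be Hewitt spaces (to pass from ring isomorphism of $C(X)$ and $C(Y)$ to a homeomorphism), while Theorem \ref{small graph to big} imposes no such hypothesis; for the biconditional to be stated as written, the Hewitt assumption should be carried into the hypotheses of Theorem \ref{small iff big graph isomorphism}, or one should at least remark that the forward direction is what forces it. With that caveat the proof reduces to: ``By Theorems \ref{big graph to small} and \ref{small graph to big}, the two statements are equivalent.''
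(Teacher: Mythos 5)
Your proposal matches the paper's own proof exactly: the theorem is stated there as an immediate consequence of Theorems \ref{big graph to small} and \ref{small graph to big}, with no further argument. Your caveat about the missing Hewitt hypothesis is well taken --- the paper indeed omits it from the statement even though Theorem \ref{big graph to small} requires it --- so your version is, if anything, more careful than the original.
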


\begin{theorem}\label{homeomorphic iff graph isomorphic}
Let $X$ and $Y$ be two Hewitt spaces. $X$ is homeomorphic to $Y$ if and only if the graph $\Gamma(C_{+}(X))$ is isomorphic to the graph $\Gamma(C_{+}(Y))$.
\end{theorem}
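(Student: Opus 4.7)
The theorem is an equivalence that can be established by chaining together results already proved in the paper and the cited companion paper \cite{ref1}; my plan is to treat it as a pure composition of prior implications rather than a fresh argument, since all the heavy machinery (the construction of $\bar\phi$, the semiring-to-graph transfer, and the Hewitt realcompactness characterization) has already been set up.

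For the forward direction, suppose $X$ is homeomorphic to $Y$. Since Hewitt spaces are precisely the realcompact Tychonoff spaces, Theorem \ref{ring iso iff semiring iso} applies and gives a semiring isomorphism $C_{+}(X) \cong C_{+}(Y)$. Feeding this into Theorem \ref{5.10} immediately produces a graph isomorphism $\Gamma(C_{+}(X)) \cong \Gamma(C_{+}(Y))$. This half is essentially a one-line composition and presents no obstacle.

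For the reverse direction, suppose $\Gamma(C_{+}(X)) \cong \Gamma(C_{+}(Y))$. By Theorem \ref{small iff big graph isomorphism} (the equivalence just established via the lifting construction $\phi \mapsto \bar\phi$), the big graphs satisfy $\Gamma(C(X)) \cong \Gamma(C(Y))$. Now I would invoke Theorem 5.7 of \cite{ref1}, which asserts that for Hewitt spaces an isomorphism of zero-set intersection graphs on $C(X)$ and $C(Y)$ forces a ring isomorphism $C(X) \cong C(Y)$. Finally, the classical Hewitt--Nachbin theorem (recorded and used in the proof of Theorem \ref{big graph to small}) says that for realcompact $X$ and $Y$, $C(X) \cong C(Y)$ as rings implies $X$ is homeomorphic to $Y$; this closes the loop.

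The only place where one has to be careful is the implicit identification of Hewitt spaces with realcompact spaces, so that both Theorem \ref{ring iso iff semiring iso} and the ring-isomorphism-to-homeomorphism step are legitimately available; once that terminology is pinned down, nothing in the argument is a genuine obstacle. In effect, the substantive work has been carried out in Theorems \ref{5.10}, \ref{big graph to small}, and \ref{small graph to big} (together with Theorem 5.7 of \cite{ref1}), and the present theorem is their packaging.
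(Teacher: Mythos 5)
Your proposal is correct and follows essentially the same route as the paper: the paper's own proof simply chains the equivalence $X\cong Y\iff\Gamma(C(X))\cong\Gamma(C(Y))$ from \cite{ref1} with Theorem \ref{small iff big graph isomorphism}, which is the same circle of prior results you invoke. Your forward direction is marginally more direct (going $X\cong Y\Rightarrow C_{+}(X)\cong C_{+}(Y)\Rightarrow\Gamma(C_{+}(X))\cong\Gamma(C_{+}(Y))$ via Theorems \ref{ring iso iff semiring iso} and \ref{5.10}, rather than detouring through $\Gamma(C(X))$), but this is a cosmetic reordering of the same ingredients and both arguments are sound.
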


\begin{proof}
By Theorem 5.8 and Corollary 5.11 of \cite{ref1} it follows that $X$ is homeomorphic to $Y$ if and only if $\Gamma(C(X))$ is graph isomorphic to $\Gamma(C(Y))$. Also by Theorem \ref{small iff big graph isomorphism}, $\Gamma(C(X))$ and $\Gamma(C(Y))$ are graph isomorphic if and only if $\Gamma(C_{+}(X))$ and $\Gamma(C_{+}(Y))$ are graph isomorphic. Hence the result follows.
\end{proof}

Therefore combining Theorems \ref{small iff big graph isomorphism}, \ref{homeomorphic iff graph isomorphic} and \ref{ring iso iff semiring iso}, we have the following result, which establishes the fact that the graph structure of $\Gamma(C_{+}(X))$ is sensitive enough to distinguish the spaces belonging to the class of all Hewitt spaces.
\begin{theorem}\label{combined main result}
Let $X,Y$ be Hewitt spaces. Then the following are equivalent:
\begin{itemize}
    \item [$(i)$] $X$ is homeomorphic to $Y$.
    \item [$(ii)$] The semirings $C_{+}(X)$ and $C_{+}(Y)$ are isomorphic.
    \item [$(iii)$] $\Gamma(C_{+}(X))$ and $\Gamma(C_{+}(Y))$ are graph isomorphic.
    \item [$(iv)$] $\Gamma(C(X))$ and $\Gamma(C(Y))$ are graph isomorphic.
    \item [$(v)$] The rings $C(X)$ and $C(Y)$ are isomorphic.
\end{itemize}
\end{theorem}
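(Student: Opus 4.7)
The plan is to assemble the full chain of equivalences by routing everything through the homeomorphism condition (i), so that no direct comparison is needed between, say, the semiring condition (ii) and the graph condition (iii); each implication will be supplied by a theorem already proved in this paper or cited from the literature.

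First I would dispose of the two ``algebraic'' equivalences. For (i) $\Leftrightarrow$ (ii), since every Hewitt space is realcompact by definition, Theorem \ref{ring iso iff semiring iso} applies verbatim and gives the semiring characterization of the homeomorphism. For (i) $\Leftrightarrow$ (v), I would invoke the classical Gelfand--Kolmogorov / Hewitt theorem: for realcompact (equivalently, Hewitt) spaces $X$ and $Y$, the rings $C(X)$ and $C(Y)$ are isomorphic if and only if $X$ and $Y$ are homeomorphic. (This is precisely the fact already used in the proof of Theorem \ref{big graph to small}.)

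Next I would handle the two ``graph'' equivalences. Theorem \ref{homeomorphic iff graph isomorphic} gives directly (i) $\Leftrightarrow$ (iii). Then Theorem \ref{small iff big graph isomorphism} gives (iii) $\Leftrightarrow$ (iv). Putting these together with the previous paragraph, all four of (ii), (iii), (iv), (v) are equivalent to (i), hence pairwise equivalent to each other; this completes the proof.

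There is no real obstacle here: the theorem is a compilation. The only substantive point to be careful about is verifying the correct hypotheses in the cited statements. Theorem \ref{ring iso iff semiring iso} requires realcompactness, which is supplied by the Hewitt hypothesis; Theorem \ref{homeomorphic iff graph isomorphic} explicitly assumes Hewitt spaces; Theorem \ref{small iff big graph isomorphism} is stated for general topological spaces, so applies freely; and the classical ring-theoretic characterization again needs realcompactness, provided by the same hypothesis. Once these preconditions are lined up, the equivalences collapse into the single cycle (ii) $\Leftrightarrow$ (i) $\Leftrightarrow$ (iii) $\Leftrightarrow$ (iv) and (i) $\Leftrightarrow$ (v), yielding the stated five-way equivalence.
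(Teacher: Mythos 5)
Your proposal is correct and matches the paper's own treatment: the paper likewise presents this theorem as a compilation, obtained by combining Theorem \ref{ring iso iff semiring iso} for (i)$\Leftrightarrow$(ii), Theorem \ref{homeomorphic iff graph isomorphic} for (i)$\Leftrightarrow$(iii), Theorem \ref{small iff big graph isomorphism} for (iii)$\Leftrightarrow$(iv), and the classical realcompact characterization for (i)$\Leftrightarrow$(v). Your extra care in checking that the Hewitt hypothesis supplies realcompactness where needed is a welcome addition, but the route is the same.
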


\begin{tikzpicture}
    \node[
      regular polygon,
      regular polygon sides=5,
      minimum width=100mm,
    ] (PG) {}
      (PG.corner 1) node (PG1) {$X\cong Y$ (as hewitt spaces)}
      (PG.corner 2) node (PG2) {(as graphs) $\Gamma(C(X))\cong \Gamma(C(Y))$}
      (PG.corner 3) node (PG3) {(as rings)$C(X)\cong C(Y)$}
      (PG.corner 4) node (PG4) {$C_{+}(X)\cong C_{+}(Y)$ (as semirings)}
      (PG.corner 5) node (PG5) {$\Gamma(C_{+}(X))\cong \Gamma(C_{+}(Y))$ (as graphs)}
    ;
    \foreach \S/\E in {
      1/3, 1/4, 1/5,
      2/1, 2/3, 2/4, 2/5,
      3/4,
      5/3, 5/4%
    } {
      \draw[thick=3,double,<->] (PG\S) -- (PG\E);
    }
  \end{tikzpicture}
\begin{center}
      The Combined Picture
\end{center}

From the following remark we will see that there exists subcollections of $C(X)$ having a more generalized algebraic structure than the semiring $C_{+}(X)$ that are graph isomorphic to the graph $\Gamma(C_{+}(X))$ of $C_{+}(X)$. Moreover, those structures also preserve the graph isomorphisms along with the homeomorphism of topological spaces.
\begin{remark}
Let us consider the set $C_{-}(X)$ of all non-positive valued continuous functions over a topological space $X$. It is no longer a semiring, rather takes home in a more generalized algebraic structure called $\Gamma$-semiring with pointwise addition and multiplication, where $\Gamma$ is the same set $C_{-}(X)$ (see section 5 of \cite{paper 3} for further details on the $\Gamma$-semiring $C_{-}(X)$). Now, if we consider the induced subgraph on $C_{-}(X)$ then we can easily observe that the two subgraphs $\Gamma(C_{+}(X))$ and $\Gamma(C_{-}(X))$ of the graph $\Gamma(C(X))$ are isomorphic. Also from the results proved in \cite{paper 3} it follows that the two semirings $C_{+}(X)$ and $C_{+}(Y)$ are isomorphic if and only if the two $\Gamma$-semirings $C_{-}(X)$ and $C_{-}(Y)$ are isomorphic. Therefore we conclude the following result.
\end{remark}

\begin{theorem}
Let $X,Y$ be Hewitt spaces. Then the following are equivalent:
\begin{itemize}
    \item [$(i)$] $X$ is homeomorphic to $Y$.
    \item [$(ii)$] The $\Gamma$-semirings $C_{-}(X)$ and $C_{-}(Y)$ are isomorphic. 
    \item [$(iii)$] $\Gamma(C_{-}(X))$ and $\Gamma(C_{-}(Y))$ are graph isomorphic.
    \item [$(iv)$] The semirings $C_{+}(X)$ and $C_{+}(Y)$ are isomorphic.
    \item [$(v)$] $\Gamma(C_{+}(X))$ and $\Gamma(C_{+}(Y))$ are graph isomorphic.
    \item [$(vi)$] $\Gamma(C(X))$ and $\Gamma(C(Y))$ are graph isomorphic.
    \item [$(vii)$] The rings $C(X)$ and $C(Y)$ are isomorphic.
\end{itemize}

\end{theorem}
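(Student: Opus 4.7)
The plan is to piggyback on Theorem \ref{combined main result}, which already provides the equivalence of clauses (i), (iv), (v), (vi) and (vii) of the present statement (they correspond, respectively, to clauses (i)--(v) of that previous theorem). It therefore suffices to attach the two new clauses (ii) and (iii) to this already-established chain, and the preceding remark flags exactly the two bridges needed: the graph-theoretic bridge (iii) $\Leftrightarrow$ (v) and the algebraic bridge (ii) $\Leftrightarrow$ (iv).

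For (iii) $\Leftrightarrow$ (v), the key tool is the pointwise negation map $\psi_{X}\colon C_{+}(X)\to C_{-}(X)$, $f\mapsto -f$. Since $Z(f)=Z(-f)$, this bijection sends units to units and non-units to non-units, and a pair $(f,g)$ satisfies $Z(f)\cap Z(g)\neq\emptyset$ if and only if the corresponding pair $(-f,-g)$ does. Hence $\psi_{X}$ descends to a graph isomorphism between the induced subgraphs $\Gamma(C_{+}(X))$ and $\Gamma(C_{-}(X))$, and likewise $\psi_{Y}$ gives $\Gamma(C_{+}(Y))\cong\Gamma(C_{-}(Y))$. Composing these two fixed isomorphisms with any purported graph isomorphism between $\Gamma(C_{+}(X))$ and $\Gamma(C_{+}(Y))$ (respectively, between $\Gamma(C_{-}(X))$ and $\Gamma(C_{-}(Y))$) yields the other, which settles (iii) $\Leftrightarrow$ (v).

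For (ii) $\Leftrightarrow$ (iv), the argument is not self-contained: one invokes the fact, established in \cite{paper 3} and recorded in the remark immediately preceding the statement, that the same negation correspondence $f\mapsto -f$ transports the semiring operations on $C_{+}(X)$ to the $\Gamma$-semiring operations on $C_{-}(X)$ in a functorial way, so that a semiring isomorphism $C_{+}(X)\cong C_{+}(Y)$ induces a $\Gamma$-semiring isomorphism $C_{-}(X)\cong C_{-}(Y)$, and conversely.

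The main obstacle is genuinely the equivalence (ii) $\Leftrightarrow$ (iv), which is not an internal fact of this paper but rests on the external results of \cite{paper 3} concerning the algebraic compatibility of the negation correspondence with the $\Gamma$-semiring operations; by contrast, the graph side (iii) $\Leftrightarrow$ (v) is essentially immediate from $Z(f)=Z(-f)$. Once both bridges are installed, the seven-way equivalence follows by concatenation with Theorem \ref{combined main result}, so the final write-up can be kept to a few lines: assume one of the seven clauses, use Theorem \ref{combined main result} to travel freely among (i), (iv), (v), (vi), (vii), and use the two bridges above to cross over to (ii) and (iii).
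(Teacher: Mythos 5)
Your proposal is correct and follows essentially the same route as the paper, which likewise derives the result by combining Theorem \ref{combined main result} with the two bridges stated in the preceding remark: the graph isomorphism $\Gamma(C_{+}(X))\cong\Gamma(C_{-}(X))$ induced by negation, and the equivalence of the semiring isomorphism $C_{+}(X)\cong C_{+}(Y)$ with the $\Gamma$-semiring isomorphism $C_{-}(X)\cong C_{-}(Y)$ imported from \cite{paper 3}. Your write-up is in fact slightly more explicit than the paper's, which leaves both bridges at the level of the remark.
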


\textbf{Concluding Remark.} To extend our work, it would be nice if one can try to characterize those subcollections of functions (not necessarily forming any particular algebraic structure like rings or semirings) from $\Gamma(C(X))$, which preserves the graph structure of $\Gamma(C(X))$. To be specific, let $A(X)$ and $A(Y)$ be two subcollections of functions of $\Gamma(C(X))$ and $\Gamma(C(Y))$ respectively. Then we are two find those $A(X)$ and $A(Y)$ for which $\Gamma(C(X))$ and $\Gamma(C(Y))$ are graph isomorphic if and only if $A(X)$ and $A(Y)$ are graph isomorphic.

\textbf{Acknowledgement.}
The authors are thankful to Prof. Angsuman Das, Department of Mathematics, Presidency University and Prof. Sujit Kumar Sardar, Department of Mathematics, Jadavpur University, for their encouragment and their valuable suggestions to improve the paper. The first author is grateful to Department of Science and Technology, Govt. of India, for providing research fellowship as an SRF.

\end{document}